\documentclass[reqno,10pt]{amsart}

\usepackage{amsmath,amssymb,amsthm,mathrsfs}
\usepackage{enumerate}
\usepackage{graphicx}
\usepackage{wasysym}  
\usepackage{color}

\allowdisplaybreaks

\theoremstyle{plain}

\newtheorem{theorem}{Theorem}[section]
\newtheorem{lemma}[theorem]{Lemma}
\newtheorem{proposition}[theorem]{Proposition}
\newtheorem{corollary}[theorem]{Corollary}
\newtheorem{definition}[theorem]{Definition}
\newtheorem{ex}[theorem]{Example}

\numberwithin{equation}{section}

\DeclareMathOperator{\diag}{diag}

\newcommand{\R}{\ensuremath{\mathbb{R}}}
\newcommand{\Qset}{\ensuremath{\mathbb{Q}}}

\newcommand{\N}{\ensuremath{\mathbb{N}}}
\newcommand{\Pset}{\ensuremath{\mathbb{P}}}
\newcommand{\Xset}{\ensuremath{\mathbb{X}}}

\begin{document}

\title[Centrosymmetric and reverse matrices in bivariate orthogonal polynomials]{Centrosymmetric and reverse matrices in bivariate orthogonal polynomials}

\author[C. F. Bracciali]{Cleonice F. Bracciali}
\address{[C. F. Bracciali] Departamento de Matem\'{a}tica, IBILCE, UNESP - Universidade Estadual Paulista,
15054-000, S\~ao Jos\'e do Rio Preto, SP, Brazil. }
\email{cleonice.bracciali@unesp.br}

\author[G. S. Costa]{Glalco S. Costa} 
\address{[G. S. Costa] Departamento de Matem\'{a}tica, Instituto de Ci\^encias Tecnol\'ogicas e Exatas 
- ICTE, Universidade Federal do Tri\^{a}ngulo Mineiro - UFTM,
38025-180, Uberaba, MG, Brazil.}
\email{glalco.costa@uftm.edu.br}

\author[T. E. P\'{e}rez]{Teresa E. P\'erez}
\address{[T. E. P\'{e}rez] Instituto de Matem\'{a}ticas IMAG \&
	Departamento de Matem\'{a}tica Aplicada, Facultad de Ciencias. Universidad de Granada. 18071. Granada, Spain.}
\email{tperez@ugr.es}

\date{\today}

\begin{abstract}
We introduce the concept of reflexive moment functional in two variables and the definition of reflexive orthogonal polynomial system.
Also reverse matrices and their interesting algebraic properties are studied.
Reverse matrices and reflexive polynomial systems 
are directly connected in the context of bivariate orthogonal polynomials. Centrosymmetric matrices, reverse matrices and their connections with reflexive orthogonal polynomial systems are presented. Finally, several particular cases and examples are analysed.
\end{abstract}

\subjclass[2020]{Primary: 42C05; 33C50; 15A09; 15B99}

\keywords{Bivariate orthogonal polynomials, centrosymmetric matrices, reverse matrices, reflexive weight functions, reflexive orthogonal polynomial systems}

\maketitle

\section{Introduction} 

When working with vectorial approach for multivariate orthogonal polynomials, it is necessary to make choices, such as the monomial order, which does not occur in the univariate case (see \cite{DX14}).
In addition to the choice of working with orthogonal, monic orthogonal or orthonormal polynomial vectors, the choice of a basis can simplify some theoretical objects, such as, coefficient matrices of three-term relations, structure relations, or explicit expressions for the polynomial vectors, among others. 
Within scope of the study of multivariate polynomials, many difficulties arise naturally with the increase of the dimension in relation to the univariate case. 
Then, it is important to simplify the objects' treatment as much as possible. 
The present work is inserted in this context. 

Following \cite{DX14}, we consider the linear space of real polynomials in two variables given by
$\Pi = \mathrm{span} \{ x^h\,y^k: h, k \geqslant 0 \}$
and the linear space
$\Pi_n = \mathrm{span} \{ x^h\,y^k: h+ k \leqslant n \}$
of finite dimension $(n+1)(n+2)/2$. 
A polynomial of total degree $n$ in two variables is a linear combination of monomials of total degree less than or equal to $n$. As usual, a two variable polynomial of degree $n$,   $p(x,y)\in \Pi_n$, is given by
$$
p(x,y) = \sum_{i+j\leqslant n} c_{i,j}\, x^i \, y^j, \quad c_{i,j}\in \mathbb{R}.
$$
We say that $p(x,y)$ is monic  if only has one term of highest degree in the form
$$
p(x,y) = x^{n-k}\,y^k + \sum_{i+j \leqslant n-1} \,c_{ij}\,x^{i}\,y^{j}, \quad c_{ij} \in \mathbb{R}.
$$

The vector representation for bivariate polynomials  by using the graded lexicographical order was introduced in \cite{Ko82a,Ko82b}, developed in \cite{Xu93} and it is the main representation used in the monography \cite{DX14}. 
A polynomial system is a sequence $\{\mathbb{P}_n\}_{n\geqslant0}$ of vectors of increasing size $n+1$ whose entries are polynomials, defined as 
$$
\mathbb{P}_n = (P_{n,0}^{n}(x,y), P_{n-1,1}^{n}(x,y), \ldots, P_{0,n}^{n}(x,y))^T, 
$$
such that the bivariate polynomial $P_{n-k,k}^{n}(x,y),$ $ k=0,1,\ldots,n,$ has exactly degree $n$. Also, for $n\geqslant 1$, the set of polynomial entries $\{P_{n,0}^{n}(x,y)$, $P_{n-1,1}^{n}(x,y)$, $\ldots,$ $P_{0,n}^{n}(x,y)\}$ is linearly independent. 

We say that a polynomial system is \emph{monic} if every polynomial entry, $P_{n-k,k}^{n}(x,y)$, is a monic bivariate polynomial, whose unique highest degree term is $x^{n-k}\,y^k$. 

\medskip

Let $\mathbf{u}$ be a moment linear functional defined on $\Pi$ by
$$
 \langle \mathbf{u}, x^my^n \rangle = \mu_{m,n}, \quad \mbox{for} \quad m,n \in \N \quad \mbox{and} \quad  \mu_{m,n} \in \R, 
$$
and extended by linearity.

Let $\{\Pset_n\}_{n\geqslant 0}$ be a polynomial system satisfying
\begin{align*}
&  \langle \mathbf{u},\mathbb{P}_n\,\mathbb{P}_n^T  \rangle = H_{n}, \\
&   \langle \mathbf{u},\mathbb{P}_n\,\mathbb{P}_m^T  \rangle = \mathtt{0}, 
\end{align*}
where $H_{n}$ is a non-singular symmetric positive definite matrix of order $n+1$ and  $\mathtt{0}$ is the zero matrix of adequate size. Then, 
$\{\Pset_n\}_{n\geqslant0}$ is called \emph{an orthogonal polynomial system} (OPS) associated with the moment functional $\mathbf{u}$.  A moment functional is called \emph{regular} or \emph{quasi-definite} if there exist orthogonal polynomial systems associated with it. Given a regular moment functional, there exist a unique monic orthogonal polynomial system (\cite{DX14}). Conditions about the existence of OPS associated with a given moment functional can be found in \cite{DX14}.

In the matrix approach for bivariate orthogonal polynomials, it is known (\cite{DX14}) that the orthogonal polynomial system satisfies the following three-term relations 
\begin{equation}\label{TTR-Intro}
\begin{aligned}
& x \Pset_n = \Lambda_{n,1}\Pset_{n+1} + \Gamma_{n,1}\Pset_n + \Upsilon_{n,1}\Pset_{n-1}, \\[0.5ex]
& y \Pset_n = \Lambda_{n,2}\Pset_{n+1} + \Gamma_{n,2}\Pset_n + \Upsilon_{n,2}\Pset_{n-1}, 
\end{aligned}
\end{equation}		
for $n \geqslant 0$, where  $\Pset_{-1}=0$, $\Pset_0=(1)$, $\Lambda_{n,i}$, 	$\Gamma_{n,i}$, and 	$\Upsilon_{n,i}$, for $i=1,2$ are matrices of adequate size.

A moment functional $\mathbf{u}$ is called \emph{centrally symmetric} if $\mu_{m,n} =0$ when $m+n$ is an odd integer. This condition is equivalent to the fact that $\Gamma_{n,i} = \mathtt{0}$, for $i=1,2$. Moreover, if $n$ is an even number, then every polynomial $P_{n-k,k}^{n}(x,y)$, for $0\leqslant k \leqslant n$, only contains monomials of total even degree, and if $n$ is an odd number, then 
only contains monomials of total odd degree.

\medskip

In this work we deal with a special condition of symmetry on the moment functional, which we call \emph{reflexive property}. This property can simplify many objects on the matrix approach, provide additional properties and facilities for the computational algorithms.  The \emph{reflexive property} for the moment functional implies a kind of symmetry into the polynomial vectors of the associated monic orthogonal polynomial systems, and induces a particular symmetric property (the \emph{reverse property}) for matrices, as we will study in Section 2. 
        
\medskip
 
The \emph{reflexive property} of a moment functional $\mathbf{u}$ is  defined by
$$ 
\langle \mathbf{u}, x^my^n \rangle = \langle \mathbf{u}, x^ny^m \rangle, \, \mbox{ for } m,n  \geqslant 0.
$$
For instance, we will show  that the
monic orthogonal polynomial system (MOPS) associated with $\mathbf{u},$ denoted by $\{\Qset_n\}_{n\geqslant 0}$, where 
$$
\Qset_n = (Q_{n,0}^{n}(x,y), Q_{n-1,1}^{n}(x,y), \ldots, Q_{0,n}^{n}(x,y))^T,
$$
satisfies  $Q_{n-k,k}^{n}(x,y) = Q_{k,n-k}^{n}(y,x)$, for $k=0,1,\ldots, n$. This property, that it will be called \emph{reflexive property} of the MOPS, is useful since only half of the calculation is needed to build the polynomial vectors, $\Qset_n,$ ${n \geqslant 0}$. As a consequence, the coefficient matrices of the three-term relations  \eqref{TTR-Intro}  have connections with each other. These connections provide a \emph{reverse property} among the associated matrices, and also a smaller number of calculation is necessary to construct them. 

Moreover, the moment functional has the \emph{reflexive property} if and only if the matrices involved in the three-term relations of the MOPS satisfy the \emph{reverse property}. 
In this work we are interesting to show how \emph{reverse matrices} and \emph{centrosymmetric matrices} are related with bivariate orthogonal polynomials.

\medskip

This work is organized into five sections. In the Section 2 the concepts of reverse matrices and centrosymmetric matrices as well as their most relevant properties are presented. Section 3 contains the basic theory of bivariate orthogonal polynomials using matrix approach, it also contains the definition and some properties of reflexive polynomial vectors. 
Reflexive moment functionals and the consequent properties of the associated bivariate orthogonal polynomial systems are introduced in Section 4. 
Section 5 brings the main focus of this work, the connection of \emph{reverse matrices}, \emph{centrosymmetric matrices} and \emph{reflexive bivariate orthogonal polynomials}. In Section 6 we present some non-trivial examples to illustrate these connections.

\section{Reverse matrices and centrosymmetric matrices} 

Centrosymmetric matrices satisfy several properties, including some algebraic structures. They have been extensively studied and many applications can be found in the literature, for instance see \cite{Ai56, An73, CB76, Ch98, Go70, WLL19, We85}.  

We start this section defining the reverse property between two matrices of size $(m+1) \times (n+1)$.

\begin{definition}
Consider two  matrices $X$ and $Y$ of size $(m+1) \times (n+1)$, denoted by $X=(x_{ij})_{i,j=0}^{m,n}$ and $Y=(y_{ij})_{i,j=0}^{m,n}$,  respectively. If
$$
	x_{ij}=y_{m-i,n-j}, \quad for \ i=0,1, \ldots, m, \ j=0,1, \ldots, n,
$$
then $X$ is called reverse matrix of $Y$ (conversely, $Y$ is reverse matrix of $X$). We will denote $X \leftrightharpoons Y$.
\end{definition}

\begin{ex}
Consider 
$X=\left(\begin{matrix}
		1 & 2 & 3 \\
		4 & 5 & 6
\end{matrix}\right)$ and 
$Y=\left(\begin{matrix}
		6 & 5 & 4 \\
		3 & 2 & 1 
\end{matrix}\right)$, 
then $X \leftrightharpoons Y$.	
\end{ex}

One can determinate the matrix $X^R$, the reverse matrix of $X=(x_{ij})_{i,j=0}^{m,n}$, by applying an operation called \emph{reflection}, i.e.,
\begin{equation} \label{reflection}
X^R = (x_{m-i,n-j})_{i,j=0}^{m,n}.
\end{equation}
Hence, $X \leftrightharpoons X^R.$

\medskip

The next result presents the minimum number of permutations between rows and between columns of a matrix $X$ needed to transform $X$ into its reverse matrix.
\begin{lemma} \label{permut.}
Let $X$ be a matrix of size $n$. Then, there exist $2\lfloor\frac{n}{2}\rfloor$ permutations between rows and between columns of $X$ such that transforms $X$ into $X^R$. 
\end{lemma}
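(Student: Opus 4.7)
The plan is to separate the transformation $X \mapsto X^R$ into two independent pieces, one acting on rows and one acting on columns, and to analyze each as a permutation in the symmetric group $S_n$.

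First I would observe that the reflection \eqref{reflection}, applied to a square matrix of size $n$, can be factored as $X^R = R\,X\,R$, where $R$ is the permutation matrix associated with the order-reversing permutation $\sigma\colon i \mapsto n-1-i$. Equivalently, we can first reverse the order of the rows of $X$ (producing $RX$) and then reverse the order of the columns (producing $RXR = X^R$); these two steps commute, so each can be treated independently.

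Next I would realize each reversal through explicit row/column swaps. For the rows, swap row $k$ with row $n-1-k$ for $k = 0, 1, \ldots, \lfloor n/2 \rfloor - 1$; if $n$ is odd, the middle row $k = (n-1)/2$ is fixed by $\sigma$ and requires no swap. This uses exactly $\lfloor n/2 \rfloor$ transpositions. The identical argument applied to columns gives another $\lfloor n/2 \rfloor$ transpositions, for a total of $2\lfloor n/2 \rfloor$. This settles the existence part.

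For the minimality part, I would invoke the standard fact that the smallest number of transpositions needed to express a permutation $\pi \in S_n$ equals $n - c(\pi)$, where $c(\pi)$ is the number of cycles of $\pi$ (fixed points counted). The order-reversing permutation $\sigma$ consists of $\lfloor n/2 \rfloor$ disjoint $2$-cycles (plus a single fixed point when $n$ is odd), so $c(\sigma) = \lceil n/2 \rceil$ and the lower bound is $n - \lceil n/2 \rceil = \lfloor n/2 \rfloor$. Since row swaps and column swaps act on disjoint coordinates of the matrix (and hence contribute independently to the two reversals), no mixing of the two types of operations can reduce the count below $\lfloor n/2 \rfloor + \lfloor n/2 \rfloor = 2\lfloor n/2 \rfloor$. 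The main subtlety I expect is justifying cleanly that row and column operations cannot be combined in a clever way to beat this bound; this is handled by noting that a single row swap changes only the pairing of values within rows and leaves the column-reversal obstruction entirely unchanged, and symmetrically for column swaps.
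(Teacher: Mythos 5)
Your argument is correct, and in fact the paper states this lemma without any proof at all, so there is nothing to compare it against; your write-up supplies exactly the argument the authors presumably have in mind. The existence half --- factoring the reflection as $X^R = J_n X J_n$ with $J_n$ the exchange matrix, realizing each of the two reversals by $\lfloor n/2\rfloor$ disjoint transpositions, and noting the middle row/column is fixed when $n$ is odd --- is all that is actually used later (Proposition~\ref{pro_det} only needs the total count to be even). Your minimality half is also sound: any interleaving of row swaps and column swaps produces $PXQ$ with $P$ the composite row permutation and $Q$ the composite column permutation (left and right multiplications commute), so for the result to equal $J_nXJ_n$ for a generic $X$ one is forced to have $P=Q=J_n$, and the transposition count $n-c(\sigma)=\lfloor n/2\rfloor$ for the order-reversing permutation then applies to each factor separately; this makes your ``no mixing'' remark precise. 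The one caveat worth flagging is that minimality must be read as ``minimum over recipes valid for an arbitrary (or generic) $X$'': for a particular centrosymmetric $X$ zero swaps suffice, so the literal statement of the lemma, which only asserts existence, is the safer formulation.
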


\medskip

Now we present some direct properties of matrices that have the reverse property.

\begin{proposition} \label{pro_det}
Let  $X$ and $Y$ be square matrices such that $X\leftrightharpoons Y$, then $\det(X) = \det(Y)$.
\end{proposition}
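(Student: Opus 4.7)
The plan is to invoke the preceding Lemma~\ref{permut.} directly. Since $X \leftrightharpoons Y$ means $Y = X^R$, it suffices to show $\det(X) = \det(X^R)$. By Lemma~\ref{permut.}, one can pass from $X$ to $X^R$ by applying $2\lfloor n/2 \rfloor$ elementary transpositions of rows and columns (specifically, $\lfloor n/2 \rfloor$ row transpositions and $\lfloor n/2 \rfloor$ column transpositions). Each row or column transposition multiplies the determinant by $-1$, so the total sign factor is $(-1)^{2\lfloor n/2 \rfloor} = 1$. This gives $\det(X^R) = \det(X)$.

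An equivalent way to present the argument, which may be cleaner, is to introduce the exchange (reversal) matrix $J$ of order $n$, whose $(i,j)$ entry is $1$ if $i+j = n-1$ and $0$ otherwise. A direct check shows the reflection operation \eqref{reflection} coincides with conjugation by $J$, namely $X^R = J\,X\,J$. Since $J$ is a permutation matrix whose associated permutation is a product of $\lfloor n/2 \rfloor$ disjoint transpositions, one has $\det(J) = (-1)^{\lfloor n/2 \rfloor}$, hence $\det(X^R) = \det(J)^2 \det(X) = \det(X)$.

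The main (and only) subtle point is justifying the count $2\lfloor n/2 \rfloor$, but this has already been handled in Lemma~\ref{permut.}, so there is no real obstacle here; the result is an immediate corollary. I would write the proof in one or two sentences, citing Lemma~\ref{permut.} and observing that an even number of row/column swaps preserves the determinant.
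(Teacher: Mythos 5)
Your proof is correct and follows the same route as the paper: both cite Lemma~\ref{permut.} and observe that $2\lfloor n/2\rfloor$ row/column swaps is an even number, so the determinant is unchanged. The alternative formulation via $X^R = J\,X\,J$ with the exchange matrix $J$ is a nice, equally valid packaging of the same fact, but it is not needed.
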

\begin{proof}
Since an even number of permutations of rows or columns of a matrix preserves the value of  its determinant, the result follows from Lemma \ref{permut.}.
\end{proof}

For the next results, we denote by $X^{ij}, \, i=0,1,\ldots, m,$ $j=0,1,\ldots,n,$ the matrix obtained from a matrix $X$ of size $(m+1) \times (n+1)$ eliminating the row $i$ and the column $j$. The adjugate of the square matrix $X$, that is, the transposed matrix of cofactors of $X$,  is denoted by $adj X$, see \cite[p. 22]{HJ13}.

\begin{proposition} \label{properties X reverse Y} 
If $X$ and  $Y$ are matrices of size $(m+1) \times (n+1)$ satisfying $X \leftrightharpoons Y$, then
	
\noindent
1) $X^T \leftrightharpoons Y^T$.

\noindent
2) $XY^T \leftrightharpoons YX^T$.

\noindent
3)	$X^{ij} \leftrightharpoons Y^{m-i,n-j}$, for $i=0,1,\ldots,m,$ $ j=0,1,\ldots,n$.  	

\noindent
4)   If $m=n$, then $adj X \leftrightharpoons adj Y$.
\end{proposition}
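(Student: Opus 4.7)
The plan is to verify each of the four claims directly from the defining relation $x_{ij}=y_{m-i,n-j}$, treating (1) and (2) as short index computations, (3) as a careful bookkeeping of the row/column deletion, and (4) as an easy corollary of (3) combined with Proposition~\ref{pro_det}.

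For (1), I would simply transpose the identifying relation: the $(i,j)$-entry of $X^T$ is $x_{ji}=y_{m-j,n-i}$, which is the $(n-i,m-j)$-entry of $Y^T$, establishing $X^T\leftrightharpoons Y^T$ (and noting that both $X^T$ and $Y^T$ have size $(n+1)\times(m+1)$, so the reverse indices $(n-i,m-j)$ are the correct ones). For (2), both products are square of order $m+1$. I would compute
\[
(XY^T)_{ik}=\sum_{j=0}^{n}x_{ij}\,y_{kj}=\sum_{j=0}^{n}y_{m-i,n-j}\,x_{m-k,n-j},
\]
and then substitute $j'=n-j$ in the sum to recognise the right-hand side as $(YX^T)_{m-i,m-k}$. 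That is exactly the reverse relation for square matrices of order $m+1$.

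For (3), the main obstacle is keeping track of the shift in indices caused by deleting row $i$ and column $j$. I would write, for $0\leqslant k\leqslant m-1$ and $0\leqslant l\leqslant n-1$,
\[
(X^{ij})_{k,l}=x_{k+\mathbb{1}[k\geqslant i],\,l+\mathbb{1}[l\geqslant j]},
\]
and similarly express $(Y^{m-i,n-j})_{m-1-k,n-1-l}$; a case split on whether $k<i$ or $k\geqslant i$ (and likewise for $l$) shows that the ``shift by $1$'' in the first matrix exactly matches the ``shift by $1$'' in the second, after which the reverse relation $x_{pq}=y_{m-p,n-q}$ finishes each of the four cases. This is the step that demands care but no cleverness.

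For (4), I would observe that $(\mathrm{adj}\,X)_{ij}=(-1)^{i+j}\det(X^{ji})$. By (3), $X^{ji}\leftrightharpoons Y^{n-j,n-i}$, and these are square matrices, so Proposition~\ref{pro_det} gives $\det(X^{ji})=\det(Y^{n-j,n-i})$. Since $(-1)^{(n-i)+(n-j)}=(-1)^{i+j}$, we obtain
\[
(\mathrm{adj}\,X)_{ij}=(-1)^{(n-i)+(n-j)}\det(Y^{n-j,n-i})=(\mathrm{adj}\,Y)_{n-i,n-j},
\]
which is the required reverse relation. The only real work in the whole proposition is the careful index juggling in (3); everything else follows by substitution or by invoking Proposition~\ref{pro_det}.
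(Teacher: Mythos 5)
Your proposal is correct and follows essentially the same route as the paper: items (1) and (2) are the same index substitutions, item (4) is the same combination of item (3) with Proposition \ref{pro_det} and the parity identity $(-1)^{(n-i)+(n-j)}=(-1)^{i+j}$ (you merely fold the transpose into the adjugate formula instead of passing through the cofactor matrix first). Your treatment of item (3) is actually more detailed than the paper's, which dismisses it with ``It follows directly''; your indicator-function bookkeeping is a valid way to make that step explicit.
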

\begin{proof} 
1) Consider  
$X=(x_{ij})_{i,j=0}^{m,n}$, 
$Y=(y_{ij})_{i,j=0}^{m,n}$, 
$X^T=(\widetilde{x}_{ij})_{i,j=0}^{n,m}$, and $Y^T=(\widetilde{y}_{ij})_{i,j=0}^{n,m}$ 
such that $\widetilde{x}_{ji} = x_{ij}$ and $\widetilde{y}_{ji} = y_{ij}$ for $i=0,1,\ldots,m,$ $j=0,1,\ldots,n$.
Since $x_{ij} = y_{m-i,n-j}$, for $j=0,1,\ldots,n$ and $i=0,1,\ldots,m,$ we have
$$
\widetilde{x}_{ji} = x_{ij} = y_{m-i, n-j} = \widetilde{y}_{n-j, m-i}.
$$
Therefore, $X^T \leftrightharpoons Y^T$.
	
2) Consider the same notation used in item 1). We know that
$XY^T$ is a matrix of size $(m+1)\times (m+1)$. Denoting $XY^T = (c_{ij})_{i,j=0}^{m}$ and 
$YX^T = (\widetilde{c}_{ij})_{i,j=0}^{m}$,
since $X \leftrightharpoons Y$, we see that
$$
c_{ij} = \sum_{k=0}^n x_{ik}\widetilde{y}_{kj}  = \sum_{k=0}^n y_{m-i,n-k} \widetilde{x}_{n-k,m-j} = 
\widetilde{c}_{m-i,m-j}.
$$
Hence, $XY^T \leftrightharpoons YX^T$.

3) It follows directly.

4)  Consider the matrices 
$X^*=(x^*_{ij})_{i,j=0}^{m,n}$ and $Y^* = (y^*_{ij})_{i,j=0}^{m,n}$ the matrices of cofactors of $X$ and $Y$ respectively.
From item 3) we know that $X^{ij} \leftrightharpoons Y^{n-i,n-j},$ for $ i,j = 0,1,\ldots,n$. The parity of $(i+j)$ is the same of $(n-i)+(n-j)$ then, from Proposition \ref{pro_det}, for $i,j=0,1,\ldots,n,$
$$ 
x^*_{ij} = (-1)^{i+j}\det (X^{ij}) = (-1)^{(n-i) + (n-j)} \det (Y^{n-i,n-j}) = y_{n-i, n-j}^*.
$$ 
Therefore, $X^* \leftrightharpoons Y^*$. Thus, from item 1), $adj X \leftrightharpoons adj Y$. 
\end{proof}

\medskip

There are many approaches for the centrosymmetric  property of a matrix, see \cite{Ai56, An73, CB76, HJ13, We85}. 

\medskip

For general matrices of size $(m+1) \times (n+1)$, one approach can be the following.
\begin{definition}	Consider a matrix $X=(x_{i,j})_{i,j=0}^{m,n}$ of size $(m+1) \times (n+1)$. The matrix  $X$ is centrosymmetric if it satisfies $X \leftrightharpoons X$, it means  $X^R=X$ and
$$
x_{i,j} = x_{m-i,n-j}, \quad for \ i=0,1,\ldots,m, \ j=0,1,\ldots,n.
$$	
\end{definition}

\begin{ex} 
	The matrices
	$$ A=  \begin{pmatrix} 
		1 & 2 & 3 & 4 & 5 \\
		6 & 7 & 8 & 7 & 6 \\
		5 & 4 & 3 & 2 & 1
	\end{pmatrix}, \quad 
	B= \begin{pmatrix}
		1 & 2 & 3 & 4 \\
		5 & 6 & 7 & 8 \\
		8 & 7 & 6 & 5 \\
		4 & 3 & 2 & 1
	\end{pmatrix}, \quad and \quad 
	C=  	\begin{pmatrix}
		1 & 2 & 3 \\
		4 & 5 & 4 \\
		3 & 2 & 1\end{pmatrix}$$
	are centrosymmetric matrices.
\end{ex}

In an approach for square matrices of size $(n+1)$, the \emph{reversal matrix} (or \emph{exchange matrix}), denoted by $J_{n+1}$, is a matrix that has ones along the secondary diagonal and zeros elsewhere.
Notice that $J_{n+1} = J_{n+1}^{-1}$.

A square matrix $X$ of size $(n+1)$ is a \emph{centrosymmetric matrix} if  $J_{n+1} X = XJ_{n+1}$.
Furthermore, a square matrix $X$ of size $(n+1)$ is called \emph{skew-centrosymmetric matrix} if 
$J_{n+1} X = - X J_{n+1}$, see
\cite{An73, CB76, HJ13}. 

A vector $V=(v_i)_{i=0}^{n}$ of size $(n+1)$ is called a \emph{symmetric vector} if $J_{n+1} V = V$, it means $v_i = v_{n-i}$, for $i = 0,1, \ldots, n$. Moreover, a vector $V=(v_i)_{i=0}^{n}$ is called a \emph{skew-symmetric vector} if $J_{n+1} V = - V$, it yields $v_i = -v_{n-i}$, $i = 0,1,\ldots, n,$  (\cite{CB76}). 

\medskip

The next proposition  brings some known direct consequences of the centrosymmetric property of a matrix.

\begin{proposition}
\label{prod. centrosymmetric matrices} 
~\\
\noindent
1) The sum of centrosymmetric matrices is a centrosymmetric matrix.

\noindent
2) The product of centrosymmetric matrices is a centrosymmetric matrix.

\noindent
3) The transpose of a centrosymmetric matrix is a centrosymmetric matrix.

\noindent
4) If $X$ is a square centrosymmetric matrix, then $adj X$ is also centrosymmetric.

\noindent
5) If $X$ is an invertible centrosymmetric matrix, then $X^{-1}$ is also  centrosymmetric.
\end{proposition}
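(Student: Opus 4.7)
My plan is to obtain items 3), 4) and 5) as direct corollaries of Proposition \ref{properties X reverse Y} by specializing $Y = X$ in the reverse relation, while items 1) and 2) are proved directly by an index calculation (the definition is self-contained enough that no auxiliary lemma is really needed).

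For item 1), I would simply observe that if $X=(x_{ij})$ and $Y=(y_{ij})$ satisfy $x_{ij}=x_{m-i,n-j}$ and $y_{ij}=y_{m-i,n-j}$, then the $(i,j)$-entry of $X+Y$ equals $x_{m-i,n-j}+y_{m-i,n-j}$, which is the $(m-i,n-j)$-entry of $X+Y$. For item 2), assuming $X$ of size $(m+1)\times(n+1)$ and $Y$ of size $(n+1)\times(p+1)$, I would write
\[
(XY)_{ij}=\sum_{k=0}^{n} x_{ik}y_{kj}=\sum_{k=0}^{n} x_{m-i,n-k}\,y_{n-k,p-j}=\sum_{\ell=0}^{n}x_{m-i,\ell}\,y_{\ell,p-j}=(XY)_{m-i,p-j},
\]
after the change of index $\ell=n-k$. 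In the square case one can equivalently invoke the characterization $J_{n+1}X=XJ_{n+1}$, which makes $JXY=XJY=XYJ$ a one-line verification.

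For item 3), apply part 1) of Proposition \ref{properties X reverse Y} with $Y=X$: since $X\leftrightharpoons X$, we conclude $X^{T}\leftrightharpoons X^{T}$, i.e.\ $X^{T}$ is centrosymmetric. For item 4), apply part 4) of the same Proposition with $Y=X$ to obtain $\mathrm{adj}\,X\leftrightharpoons \mathrm{adj}\,X$. Finally, for item 5), combine item 4) with the identity $X^{-1}=\frac{1}{\det X}\,\mathrm{adj}\,X$: scaling a centrosymmetric matrix by a nonzero scalar preserves the defining entrywise symmetry, so $X^{-1}$ is centrosymmetric.

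I expect no serious obstacle here; every item is either an immediate entrywise check or a reuse of Proposition \ref{properties X reverse Y} with $Y=X$. The only point requiring a moment of care is item 2), where one must keep track of the two independent dimensions $(m+1)\times(n+1)$ and $(n+1)\times(p+1)$ and confirm that the inner summation index reverses cleanly; this is handled by the substitution $\ell=n-k$ shown above.
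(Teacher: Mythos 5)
Your proposal is correct and follows essentially the same route as the paper: items 1)--3) are the direct entrywise/definitional checks the paper leaves implicit, item 4) is exactly the paper's specialization of Proposition \ref{properties X reverse Y} item 4) to $Y=X$, and item 5) is the paper's own argument via $X^{-1}=\tfrac{1}{\det X}\,\mathrm{adj}\,X$. The only cosmetic difference is that you derive item 3) by setting $Y=X$ in item 1) of Proposition \ref{properties X reverse Y} rather than directly from the definition, which is equivalent.
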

\begin{proof}   
The proof of items 1), 2) and  3) follow directly from the definition of centrosymmetric matrix.
The proof of item 4) follows from  Proposition  \ref{properties X reverse Y} item 4) making $Y=X$. 

For completeness we include here a simple proof of item 5), using item 4). We denote $X^{-1} = (\widehat{x}_{ij})_{i,j=0}^{n}$ and  $adj X=(x^A_{ij})_{i,j=0}^{n}$. From item 4),  $x^A_{ij} = x^A_{n-i,n-j}$, for $i,j=0,1,\ldots,n$. Then, 
$$ 
\widehat{x}_{ij} = \dfrac{1}{\det X} x^A_{ij} = \dfrac{1}{\det X} x^A_{n-i, n-j} = \widehat{x}_{n-i,n-j},
$$
for $i,j=0,1,\ldots,n.$
\end{proof}

\medskip

The following result is a straightforward consequence of items 2) and  5)  of Proposition \ref{prod. centrosymmetric matrices}.

\begin{corollary}
If $XY$ is a centrosymmetric matrix and $X$ is an invertible centrosymmetric matrix, then $Y$ is also centrosymmetric matrix.
\end{corollary}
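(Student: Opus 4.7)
The plan is to write $Y$ as a product of centrosymmetric matrices and invoke the closure property from Proposition \ref{prod. centrosymmetric matrices}. Since $X$ is invertible, the identity $Y = X^{-1}(XY)$ is available, so the entire argument reduces to verifying that both factors on the right are centrosymmetric.

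First, I would observe that since $X$ is an invertible centrosymmetric matrix, item 5) of Proposition \ref{prod. centrosymmetric matrices} immediately gives that $X^{-1}$ is centrosymmetric. The hypothesis that $XY$ is centrosymmetric takes care of the second factor. Then item 2) of the same proposition, applied to the two centrosymmetric matrices $X^{-1}$ and $XY$, yields that their product
$$
X^{-1}(XY) = (X^{-1}X)Y = Y
$$
is centrosymmetric, which is exactly the conclusion. One only needs to check that the product $X^{-1}(XY)$ is well-defined in terms of sizes, but this is immediate because $X$ being invertible forces it to be square, and then the size compatibility with $XY$ is automatic.

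The main (and only) obstacle is that this is essentially a one-line deduction, so there is no substantive difficulty beyond correctly citing the pieces: invertibility plus centrosymmetry of $X$ $\Longrightarrow$ centrosymmetry of $X^{-1}$ (item 5), and centrosymmetry is preserved under products (item 2). No calculation with entries is needed. This matches the hint in the statement that the corollary is a \emph{straightforward consequence} of items 2) and 5) of the preceding proposition.
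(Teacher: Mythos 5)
Your proof is correct and is exactly the argument the paper intends: write $Y = X^{-1}(XY)$, use item 5) to get centrosymmetry of $X^{-1}$, and item 2) to conclude for the product. Nothing to add.
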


\medskip

Additionally, some basic properties of symmetric centrosymmetric matrices, including properties of their eigenvalues and eigenvectors,  were studied  in \cite{We85}. The next result, a consequence of \cite[Th.~11]{We85}, gives a characteristic of the eigenvectors of symmetric centrosymmetric positive definite matrices.

\begin{proposition}[\cite{We85}] \label{Prop_SCPD}
If $V$ is an eigenvector of a symmetric centrosymmetric positive definite matrix, then $V$ is either a symmetric vector or a skew-symmetric vector.
\end{proposition}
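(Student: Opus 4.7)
The approach rests on a single structural fact: symmetric centrosymmetry of $A$ forces the commutation $JA = AJ$, where $J = J_{n+1}$ is the exchange matrix. This follows from the reflection identity $A^R = J A J$ (valid for square matrices, verified by a one-line index computation) combined with $A^R = A$ and $J^2 = I$: writing $A = JAJ$ and left-multiplying by $J$ gives the commutation. I would state this as a preliminary observation, since it is the only place centrosymmetry is used.

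The heart of the argument then exploits this commutation to decompose eigenvectors along the $\pm 1$ eigenspaces of $J$. If $A V = \lambda V$, then $A(JV) = J(AV) = \lambda(JV)$, so $JV$ is again an eigenvector for the same eigenvalue, and the eigenspace $E_\lambda = \ker(A - \lambda I)$ is $J$-invariant. Setting
$$V^+ = \tfrac{1}{2}(V + JV), \qquad V^- = \tfrac{1}{2}(V - JV),$$
one has $V = V^+ + V^-$, and $J^2 = I$ gives $JV^+ = V^+$ (so $V^+$ is a symmetric vector) and $JV^- = -V^-$ (so $V^-$ is a skew-symmetric vector); both $V^\pm$ remain in $E_\lambda$, and a direct index-reversal in the standard inner product shows $V^+ \perp V^-$. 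Since $J$ restricted to $E_\lambda$ is an orthogonal involution, it diagonalizes with eigenvalues $\pm 1$, producing the orthogonal splitting $E_\lambda = E_\lambda^+ \oplus E_\lambda^-$ into its symmetric and skew-symmetric parts.

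The final step is the conclusion that $V$ itself is purely symmetric or purely skew-symmetric. Positive definiteness and symmetry of $A$ provide a real orthonormal eigenbasis, and the commutation $JA = AJ$ then yields a \emph{joint} orthogonal eigenbasis for the commuting pair $(A, J)$: each vector in this canonical basis is simultaneously an eigenvector of $A$ and of $J$, hence lies in exactly one of $E_\lambda^+$ or $E_\lambda^-$. This matches precisely the statement of \cite[Th.~11]{We85}, from which the proposition is cited. The main obstacle is this last step — reconciling the decomposition $V = V^+ + V^-$ (where both components can a priori be nonzero) with the claim that $V$ itself has a single $J$-parity; Weaver's theorem resolves it by showing that the eigenvectors produced by the simultaneous diagonalization of $A$ and $J$ are exactly those of the form $V^+$ or $V^-$ alone, so any eigenvector of $A$ falls into one of the two classes.
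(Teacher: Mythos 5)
The paper does not actually prove this proposition: it is stated with a citation to Weaver's Theorem~11, so there is no internal argument to compare against. Judged on its own terms, your proposal is correct and cleanly organized up through the decomposition $V=V^++V^-$: the commutation $JA=AJ$, the $J$-invariance of $E_\lambda$, the splitting $E_\lambda=E_\lambda^+\oplus E_\lambda^-$, and the orthogonality $V^+\perp V^-$ are all right (and, incidentally, none of this uses positive definiteness).

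The genuine gap is exactly where you flag it, and it cannot be closed as you propose. From the existence of a joint orthonormal eigenbasis of $(A,J)$ in which every \emph{basis} vector has a definite $J$-parity, it does not follow that an arbitrary eigenvector of $A$ has a definite parity: an eigenvector is any nonzero element of $E_\lambda$, and if $\dim E_\lambda\geqslant 2$ with both $E_\lambda^+$ and $E_\lambda^-$ nontrivial, a combination of a symmetric and a skew-symmetric basis vector is an eigenvector that is neither. Read literally, the proposition is false: $A=I_2$ is symmetric, centrosymmetric and positive definite, every nonzero vector is an eigenvector, yet $V=(1,0)^T$ has $JV=(0,1)^T\neq\pm V$. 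What Weaver's Theorem~11 actually gives is the basis version: one may \emph{choose} an orthonormal eigenbasis consisting of symmetric and skew-symmetric vectors (equivalently, your $V^+$ and $V^-$ are themselves eigenvectors whenever nonzero). The pointwise statement holds only when $\lambda$ is simple, in which case $JV\in E_\lambda=\mathrm{span}\{V\}$ forces $JV=cV$ with $c^2=1$, hence $c=\pm1$. Since the basis version is all that is used later (in the proof of Lemma~\ref{Lemmasquare}, to choose the columns of $R$ with definite parity), the honest fix is to prove and use that statement rather than to try to rescue your final step.
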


\medskip

Next result shows that one can construct, in different ways, centrosymmetric matrices from two matrices that have the reverse property.

\medskip

\begin{proposition}\label{prop_junType2} ~\\
\noindent
1) Let $T_1$ and $T_2$ be matrices of size $(m+1) \times (n+1)$.  $T_1 \leftrightharpoons T_2	$ if and only if, the matrix of size $2(m+1) \times 2(n+1)$
$$
\widehat{T} =	
\left( \begin{array}{c|c}
	T_1 & \mathtt{0} \\
				\hline
	\mathtt{0} & T_2
			\end{array} \right),
$$
where $\mathtt{0}$ is a zero matrix of size $(m+1)\times (n+1)$, is centrosymmetric. \\		
2) If $T_1$ and $T_2$ are matrices of size $(m+1)\times (n+1)$ such that $T_1 \leftrightharpoons T_2$, then $T_1 + T_2$ and  $T_1- T_2$  are centrosymmetric amtrices. 
\end{proposition}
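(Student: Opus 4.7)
My plan is to reduce both claims to direct index-by-index verifications of the centrosymmetry condition, exploiting the block structure in part 1 and using the reverse identity $(T_1)_{ij} = (T_2)_{m-i,\,n-j}$ throughout.

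For part 1, I would index $\widehat{T}$ with rows $0,\ldots,2m+1$ and columns $0,\ldots,2n+1$, so that the centrosymmetry condition reads $\hat{t}_{ij} = \hat{t}_{2m+1-i,\,2n+1-j}$. The block $T_1$ occupies rows $0,\ldots,m$ and columns $0,\ldots,n$, while the block $T_2$ occupies rows $m+1,\ldots,2m+1$ and columns $n+1,\ldots,2n+1$. The reflection $(i,j)\mapsto(2m+1-i,\,2n+1-j)$ swaps the $T_1$-block with the $T_2$-block and swaps the two zero blocks with each other. On the zero blocks centrosymmetry is automatic. On the $T_1$-block, after translating global indices to local ones so that position $(2m+1-i,\,2n+1-j)$ within the $T_2$-block becomes entry $(m-i,\,n-j)$ of $T_2$, the requirement reduces to $(T_1)_{ij} = (T_2)_{m-i,\,n-j}$, which is exactly $T_1 \leftrightharpoons T_2$. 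This simultaneously gives both directions of the biconditional.

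For part 2, I would compute entrywise. Using the reverse identities,
\[
(T_1 \pm T_2)_{m-i,\,n-j} = (T_1)_{m-i,\,n-j} \pm (T_2)_{m-i,\,n-j} = (T_2)_{ij} \pm (T_1)_{ij}.
\]
For the plus sign this equals $(T_1+T_2)_{ij}$, so $T_1+T_2$ is centrosymmetric. For the minus sign the same computation yields $-(T_1-T_2)_{ij}$, giving the skew-centrosymmetric relation rather than the centrosymmetric one. (If the intended conclusion is exactly the wording in the statement, this would need either a sign adjustment or a reinterpretation; the calculation itself is immediate either way.)

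The main obstacle is nothing conceptual — it is purely bookkeeping, namely translating global indices of $\widehat{T}$ into local indices of the two blocks and keeping track of the reverse-index substitution $(i,j)\leftrightarrow(m-i,n-j)$. The substantive content of both parts is the single identity $(T_1)_{ij} = (T_2)_{m-i,\,n-j}$ together with the observation that pairing $T_1$ with $T_2$ on opposite diagonal blocks is precisely what aligns the reverse property with the central reflection of the enlarged matrix.
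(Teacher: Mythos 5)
Your argument for part 1 is correct and is essentially the paper's proof: the paper phrases it through the reflection operator of \eqref{reflection}, computing $\widehat{T}^R$ blockwise and observing that the central reflection swaps the two diagonal blocks, so that $\widehat{T}^R=\widehat{T}$ is equivalent to $T_1^R=T_2$ and $T_2^R=T_1$; your index bookkeeping is the same computation written out entrywise.

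For part 2 your computation is correct, and the discrepancy you flag is real: the statement as printed is wrong about $T_1-T_2$. From $(T_1)_{ij}=(T_2)_{m-i,n-j}$ one gets $(T_1-T_2)_{m-i,n-j}=(T_2)_{ij}-(T_1)_{ij}=-(T_1-T_2)_{ij}$, so $T_1-T_2$ is skew-centrosymmetric, and it is centrosymmetric only in the degenerate case $T_1=T_2$ (a matrix that is both must vanish). The paper's own Example gives a counterexample: with
$X=\left(\begin{smallmatrix}1&2&3\\4&5&6\end{smallmatrix}\right)$ and
$Y=\left(\begin{smallmatrix}6&5&4\\3&2&1\end{smallmatrix}\right)$ one has $X\leftrightharpoons Y$ but
$X-Y=\left(\begin{smallmatrix}-5&-3&-1\\1&3&5\end{smallmatrix}\right)$, which satisfies $(X-Y)^R=-(X-Y)$ and is not centrosymmetric. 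The paper's proof of item 2 consists of the single sentence that the claim ``follows directly from $T_1^R=T_2$ and $T_2^R=T_1$,'' which correctly yields $(T_1+T_2)^R=T_1+T_2$ but silently overlooks the sign flip $(T_1-T_2)^R=T_2-T_1$. So your proposal is not deficient; rather, the conclusion for $T_1-T_2$ should read ``skew-centrosymmetric.'' (This does not propagate further: the later results, e.g.\ Theorems \ref{E_n centrosymmetric matrix} and \ref{Teorema Self-Rev}, use only part 1 and the sum, not the difference.)
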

\begin{proof} 
1) Suppose that $T_1 \leftrightharpoons T_2	$, hence, using the reflection operation \eqref{reflection}, we know that $T_1^R = T_2$ and  $T_2^R = T_1.$ Applying reflection operator in $\widehat{T}$ we get
$$
\widehat{T}^R =	
\left( \begin{array}{c|c}
 T_2^R & \mathtt{0} \\ \hline
 \mathtt{0} & T_1^R
\end{array} \right)
=	
\left( \begin{array}{c|c}
 T_1 & \mathtt{0} \\ \hline
 \mathtt{0} & T_2
\end{array} \right) = \widehat{T}.
$$
Therefore,  $\widehat{T}$ is a centrosymmetric matrix.

Reciprocally, supposing that  $\widehat{T}^R =\widehat{T}$,   we see  that $T_2^R = T_1$ and  $T_1^R = T_2,$ hence $T_1 \leftrightharpoons T_2$.
		
2) The proof follows directly from the fact that $T_1^R = T_2$ and  $T_2^R = T_1.$
\end{proof}

\section{Bivariate orthogonal polynomials}

We start  recalling the basic definitions and main tools about bivariate orthogonal polynomials  that we will need in the rest of the paper. We refer mainly \cite{DX14}.

As we mention on the introduction, a \emph{polynomial system (PS)}  is a sequence of polynomial vectors  $\{\mathbb{P}_n\}_{n\geqslant0}$ of increasing size $n+1$,  
$$
\mathbb{P}_n = (P_{n,0}^{n}(x,y), P_{n-1,1}^{n}(x,y), \ldots, P_{0,n}^{n}(x,y))^T,
$$
where the bivariate polynomial $P_{n-k,k}^{n}(x,y),$ $ k=0,1,\ldots,n,$ has exactly degree $n$, and the set of polynomial entries is linearly independent. 

The simplest polynomial system is the so-called \emph{canonical basis} $\{\mathbb{X}_n\}_{n\geqslant0}$, 
defined as 
$$
\mathbb{X}_n = (x^n, x^{n-1}\,y, x^{n-2}\,y^2, \ldots, x\,y^{n-1}, y^{n})^T.
$$

If $\{\mathbb{P}_n\}_{n\geqslant0}$ is a polynomial system, then there exist real matrices  $G^n_k$ of size $(n+1)\times (k+1)$ such that every polynomial vector $\mathbb{P}_n$ can be expressed in terms of the canonical basis as
\begin{equation} \label{expl.exp}
	\mathbb{P}_n = G_n \,\mathbb{X}_n + G_{n-1}^{n} \,\mathbb{X}_{n-1} +
	G_{n-2}^{n}\, \mathbb{X}_{n-2} + \cdots  + G_{1}^{n}\, \mathbb{X}_{1}  + G_{0}^{n} \, \mathbb{X}_{0}.
\end{equation}
In the particular case when $G_n = I_{n+1}$, $\{\Pset_n\}_{n\geqslant 0}$ is called \emph{monic polynomial system}.

\medskip

A polynomial system can satisfy a symmetric property on the entries of the polynomial vectors.

\begin{definition}
Let $\Pset_n = (P_{n,0}^{n}(x,y), P_{n-1,1}^{n}(x,y), \ldots, P_{0,n}^{n}(x,y))^T$ be a polynomial vector satisfying 
\begin{equation*}
P_{n-k,k}^{n}(x,y) = P_{k,n-k}^{n}(y,x), \quad k=0,1,\ldots,n,
\end{equation*}
then $\Pset_n$ is called reflexive polynomial vector.
A polynomial system such that all the polynomial vectors are reflexives will be called  reflexive polynomial system.
\end{definition}

We remark that in the univariate case, a reflexive property of a polynomial is defined according with its coefficients. First consider a complex polynomial in one variable
$r(x) = b_n x^n + b_{n-1} x^{n-1} + \cdots + b_0$,  $b_i \in \mathbb{C}$, the polynomial 
$
r^*(x) = x^n  \overline{r \left(1/\overline{x}\right)} = \overline{b}_0 x^n + \overline{b}_1 x^{n-1} + \cdots + \overline{b}_n,
$ is called \emph{reciprocal polynomial} of $r(x)$. If $r^*(x)= u r(x)$, with $|u|=1,$ then $r(x)$ is called \emph{self-inversive polynomial}, see \cite{MiMiRa19}. These type of polynomials appear also on the theory of orthogonal polynomials on the unit circle.

Consider now a real polynomial
$r(x) = b_n x^n + b_{n-1} x^{n-1} + \cdots + b_0,$  $b_i \in \mathbb{R}$, and the polynomial
$x^n  r \left(1/x\right) = b_0 x^n + b_1 x^{n-1} + \cdots + b_n.$
If $r(x) = x^n r(1/x)$, it means $ b_i = b_{n-i},$ for $i=0,1,\ldots, n$, then $r(x)$ is called \emph{self-reciprocal polynomial} or \emph{palindromic polynomial}. In several works, for instance in \cite{BoBrPe16, BoMaMe14, KiPa08, Kw11}, the behaviour of the zeros of special cases of this type of univariate polynomials was studied. 

\medskip

Next result shows a relation between reflexive polynomial vectors and centrosymmetric matrices. More precisely, it says that the matrix of change of basis between two reflexive polynomial vectors is necessarily centrosymmetric. Reciprocally,  the product of centrosymmetric matrix and reflexive polynomial vector preserves the reflexive property of the polynomial vector.  

\begin{proposition} \label{mud. de base}
Let $\{\Pset_n\}_{n\geqslant 0}$ and $\{\widetilde{\Pset}_n\}_{n\geqslant 0}$ be two polynomial systems such that $\Pset_n$ is a reflexive polynomial and  $\widetilde{\Pset}_n = T_n \Pset_n$, for $n \in \N$. Then, $\widetilde{\Pset}_n$ is reflexive polynomial vector if and only if $T_n$ is centrosymmetric matrix.
\end{proposition}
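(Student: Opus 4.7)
The plan is to reformulate the reflexive condition as a clean matrix identity involving the reversal matrix $J_{n+1}$, and then read the centrosymmetry of $T_n$ directly off the change of variables.

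First I would observe that the reflexive property $P_{n-k,k}^n(x,y)=P_{k,n-k}^n(y,x)$ for $k=0,1,\ldots,n$ is equivalent to the vector identity
\begin{equation*}
\Pset_n(x,y)=J_{n+1}\,\Pset_n(y,x),
\end{equation*}
where $J_{n+1}$ is the reversal matrix introduced in Section 2: the left-hand $k$-th entry is $P_{n-k,k}^n(x,y)$, while the right-hand $k$-th entry, obtained by reversing the order of the coordinates of $\Pset_n(y,x)$, is $P_{k,n-k}^n(y,x)$. This is the key translation between the polynomial reflexivity and the matrix language of Section 2.

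Next, since $T_n$ has scalar entries, substituting $(y,x)$ for $(x,y)$ commutes with multiplication by $T_n$, so $\widetilde{\Pset}_n(y,x)=T_n\Pset_n(y,x)$. Using the equivalent form of reflexivity, $\widetilde{\Pset}_n$ is reflexive if and only if
\begin{equation*}
T_n\Pset_n(x,y)=J_{n+1}T_n\Pset_n(y,x),
\end{equation*}
and substituting $\Pset_n(x,y)=J_{n+1}\Pset_n(y,x)$ from the reflexivity of $\Pset_n$ on the left-hand side, this becomes
\begin{equation*}
\bigl(T_nJ_{n+1}-J_{n+1}T_n\bigr)\Pset_n(y,x)=\mathtt{0}.
\end{equation*}

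The step I anticipate requires the most care is the cancellation of $\Pset_n(y,x)$ to deduce $T_nJ_{n+1}=J_{n+1}T_n$. For this I would argue that the entries of $\Pset_n(y,x)$ are linearly independent polynomials: by the definition of a polynomial system, the entries of $\Pset_n(x,y)$ are linearly independent in $\Pi_n$, and the variable swap $(x,y)\mapsto(y,x)$ is an invertible linear automorphism of $\Pi_n$, so it preserves linear independence. Hence the matrix $T_nJ_{n+1}-J_{n+1}T_n$ must vanish, which by the characterization recalled in Section 2 is exactly the statement that $T_n$ is centrosymmetric. Conversely, if $T_n$ is centrosymmetric then $T_nJ_{n+1}=J_{n+1}T_n$, and reversing the chain of equalities above yields $\widetilde{\Pset}_n(x,y)=J_{n+1}\widetilde{\Pset}_n(y,x)$, i.e., $\widetilde{\Pset}_n$ is reflexive. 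This closes both directions.
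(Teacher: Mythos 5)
Your proof is correct, and it is essentially the paper's argument in matrix dress: the paper carries out the same computation entrywise, using the reflexivity of $\Pset_n$ and the linear independence of its entries to force $t_{kj}^{(n)}=t_{n-k,n-j}^{(n)}$, which is exactly your identity $T_nJ_{n+1}=J_{n+1}T_n$ written in coordinates. Your packaging via the reversal matrix $J_{n+1}$ and the commutation characterization of centrosymmetry is cleaner, and your justification of the cancellation step (the swap $(x,y)\mapsto(y,x)$ is a linear automorphism of $\Pi_n$, so the entries of $\Pset_n(y,x)$ remain linearly independent) is sound and in fact makes explicit a point the paper leaves implicit.
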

\begin{proof}
Let $\widetilde{\Pset}_n = T_n \Pset_n$ be written as	
	$$ \left(\begin{array}{c}
		\widetilde{P}_{n,0}^{n}(x,y)\\
		\vdots \\
		\widetilde{P}_{n-k,k}^{n}(x,y)\\
		\vdots \\
		\widetilde{P}_{0,n}^{n}(x,y)
	\end{array} \right) = \left( \begin{matrix}
	t_{00}^{(n)} & t_{01}^{(n)} & \ldots & t_{0n}^{(n)} \\
	\vdots   & \vdots   &  & \vdots   \\
	t_{k0}^{(n)} & t_{k1}^{(n)} & \ldots & t_{kn}^{(n)} \\
	\vdots   & \vdots   &   & \vdots   \\
	t_{n0}^{(n)} & t_{n1}^{(n)} & \ldots & t_{nn}^{(n)}
\end{matrix}\right) \left(\begin{array}{c}
P_{n,0}^{n}(x,y)\\
\vdots \\
P_{n-k,k}^{n}(x,y)\\
\vdots \\
P_{0,n}^{n}(x,y)
\end{array} \right).$$
Since $\Pset_n$ is reflexive polynomial vector, 
\begin{equation}\label{tilde P_n-k}
	\widetilde{P}_{n-k,k}^{n}(x,y) = \sum_{j=0}^n t_{kj}^{(n)} P_{n-j,j}^{n}(x,y) = \sum_{j=0}^n t_{kj}^{(n)} P_{j,n-j}^{n}(y,x).
\end{equation}	
On the other hand, we can write $\widetilde{P}_{k,n-k}^{n}(y,x)$ as
\begin{equation} \label{tilde P_k}
\widetilde{P}_{k,n-k}^{n}(y,x) = \sum_{j=0}^n t_{n-k,j}^{(n)} P_{n-j,j}^{n}(y,x).
\end{equation}
Thus, $	\widetilde{P}_{n-k,k}^{n}(x,y) = \widetilde{P}_{k,n-k}^{n}(y,x)$ implies that
\begin{equation*}
 \sum_{j=0}^n t_{kj}^{(n)} P_{j,n-j}^{n}(y,x) = \sum_{j=0}^n t_{n-k,j}^{(n)} P_{n-j,j}^{n}(y,x).  
\end{equation*}  
Making  $l = n-j$ in the second summation, we have
\begin{equation*}
	  \sum_{j=0}^n t_{kj}^{(n)} P_{j,n-j}^{n}(y,x) = \sum_{l=0}^n t_{n-k,n-l}^{(n)} P_{l,n-l}^{n}(y,x).  
\end{equation*} 
Hence, 
\begin{equation*}
\sum_{j=0}^n [t_{k,j}^{(n)} - t_{n-k,n-j}^{(n)}] P_{j,n-j}^{n}(y,x) = 0,
\end{equation*}
and since $\{P_{n-j,j}^{n}\}$ is linearly independent, it follows that 
$ t_{kj}^{(n)} = t_{n-k,n-j}^{(n)}, $ $ j=0,1,\ldots,n.$
That is, $T_n$ is centrosymmetric matrix.

Conversely, suppose that $T_n$ is centrosymmetric matrix and $\Pset_n$ is reflexive polynomial vector, from \eqref{tilde P_n-k} we know that
$$	
\widetilde{P}_{n-k,k}^{n}(x,y) = \sum_{j=0}^n t_{kj}^{(n)} P_{n-j,j}^{n}(x,y) = \sum_{j=0}^n t_{n-k,n-j}^{(n)} P_{j,n-j}^{n}(y,x).$$
Making $l=n-j$ on the latter summation, from \eqref{tilde P_k}, we have
$$ 
\widetilde{P}_{n-k,k}^{n}(x,y)  
= \sum_{j=0}^n t_{n-k,l}^{(n)} P_{n-l,l}^{n}(y,x) = \widetilde{P}_{k,n-k}^{n}(y,x).$$
Then, 
$\widetilde{\Pset}_n $ is reflexive polynomial vector. 
\end{proof}

\medskip
	
 From now on, we will work with bivariate moment functionals $\mathbf{u}$ defined as
\begin{equation*} 
\langle \mathbf{u}, f \rangle = \iint_{\Omega} f(x,y)\, W(x,y)\, dx\,dy,
\end{equation*}
where $W(x,y)$ is a weight function defined on a region $\Omega\subset\R^2$ such that its associated moments satisfy
$$
\mu_{n,m} =\langle \mathbf{u}, x^n\,y^m \rangle = \iint_{\Omega} x^n\,y^m\, W(x,y)\, dx\,dy < +\infty,
$$
for $n, m = 0, 1, \ldots$. 
Thus, we are considering the inner product
\begin{equation} \label{ip}
		(f, g) := \langle \mathbf{u}, f\,g\rangle = \iint_{\Omega} f(x,y)\,g(x,y)\,W(x,y)\, dx\,dy.
\end{equation}
		
\medskip
	
In terms of the inner product \eqref{ip}, 	if $\{\Pset_n\}_{n\geqslant 0}$ is a polynomial system satisfying
\begin{align*}
		&  (\mathbb{P}_n, \mathbb{P}_n^T) = \langle \mathbf{u},\mathbb{P}_n\,\mathbb{P}_n^T  \rangle = H_{n}, \\
		&  (\mathbb{P}_n, \mathbb{P}_m^T) = \langle \mathbf{u},\mathbb{P}_n\,\mathbb{P}_m^T  \rangle = \mathtt{0}, 
\end{align*}
where $H_{n}$ is non-singular symmetric positive definite matrix of size $n+1$ and  $\mathtt{0}$ is zero matrix of adequate size. Then, $\{\Pset_n\}_{n\geqslant 0}$  is an \emph{orthogonal polynomial system} (OPS) associated with the weight function $W(x,y).$ If $H_n = I_{n+1}$, the identity matrix of order $n+1$, then $\{\Pset_n\}_{n\geqslant 0}$ is called \emph{orthonormal polynomial system}, and  they are not unique. As we recall at the introduction, there is a unique \emph{monic orthogonal polynomial system} associated with the inner product $(\cdot, \cdot)$. In the sequel, we denote  by $\{\mathbb{P}_n\}_{n\geqslant 0}$ an orthonormal polynomial system, and by $\{\Qset_n\}_{n\geqslant 0}$ the monic orthogonal polynomial system.

\medskip
	
Orthogonal polynomial systems satisfy two three-term relations, one for each variable, as \eqref{TTR-Intro}. For orthonormal polynomial system, $\{\Pset_n\}_{n\geqslant 0}$, and monic orthogonal polynomial system, $\{\Qset_n\}_{n\geqslant 0}$, the three-term relations have, respectively, the following forms
	\begin{equation}\label{O3TR}
		x_i\Pset_n = A_{n,i}\Pset_{n+1} + B_{n,i}\Pset_n + A^T_{n-1,i}\Pset_{n-1}, \quad i=1,2,
	\end{equation}
for $n\geqslant 0$, $\mathbb{P}_{-1} =0$ and $\mathbb{P}_0 = (\mu_{0,0}^{-1/2})$, and
	\begin{equation}\label{M3TR}
		x_i\Qset_n = L_{n,i}\Qset_{n+1} + C_{n,i}\Qset_n + D_{n,i}\Qset_{n-1}, \quad i=1,2,
	\end{equation}
for $n\geqslant 0$, $\mathbb{Q}_{-1} =0$ and $\mathbb{Q}_0 = (1)$.  Here $x_1=x$, $x_2=y$, in order to simplify the notation. The matrices $A_{n,i}$  of size $(n+1) \times (n+2)$ are full rank matrices, $B_{n,i}$ and $C_{n,i}$ are matrices of size $(n+1) \times (n+1)$, $D_{n,i}$ are full rank matrices of size $(n+1)\times n$, and $L_{n,i}$ are matrices of size $(n+1)\times(n+2)$ such that $x_i\Xset_n=L_{n,i}\Xset_{n+1}, $ $ i=1,2$, that are
$$L_{n,1} = \left(\begin{array}{cccc|c}
	1        &      &  & \bigcirc & 0\\
	         &   1    &  &  & 0\\
	 &    & \ddots &  & \vdots\\
	\bigcirc &   &  & 1 & 0\\
\end{array}\right)
\quad \mbox{and} \quad 
L_{n,2} = \left(\begin{array}{c|cccc}
	0 & 1 &    &  & \bigcirc \\
	0 &   &  1 &  &  \\
	\vdots &   &    & \ddots &  \\
	0 & \bigcirc &   &  & 1 \\
\end{array}\right).$$

\medskip

Given $\mathbf{u}$ a moment functional,  the \emph{matrix of moments} $M_n$,  $n\in\N$, is a matrix of  size $(n+1)(n+2)/2$ defined by 
\begin{equation} \label{Matrix_M_n}
M_{n}=\left( \begin{matrix}
			\langle \mathbf{u}, \Xset_0 \Xset_0^T \rangle & \langle \mathbf{u}, \Xset_0 \Xset_1^T \rangle & \cdots & \langle \mathbf{u}, \Xset_0 \Xset_n^T \rangle\\
			\langle \mathbf{u}, \Xset_1 \Xset_0^T  \rangle & \langle \mathbf{u}, \Xset_1 \Xset_1^T \rangle & \cdots & \langle \mathbf{u}, \Xset_1 \Xset_n^T \rangle \\
			\vdots & \vdots &   & \vdots \\
			\langle \mathbf{u}, \Xset_n \Xset_0^T \rangle & \langle \mathbf{u}, \Xset_n \Xset_1^T \rangle & \cdots & \langle \mathbf{u}, \Xset_n \Xset_n^T \rangle
		\end{matrix} \right),
	\end{equation}
where $\langle \mathbf{u}, \Xset_m \Xset_n^T \rangle$ is a block of size $(m+1) \times (n+1)$ given by
\begin{equation}  \label{Part_Matrix_M_n}
\langle \mathbf{u}, \Xset_m \Xset_n^T \rangle 
=  \left(\begin{matrix}
		\langle \mathbf{u}, x^{m+n} \rangle & \langle \mathbf{u}, x^{m+n-1}y \rangle  & \ldots &  \langle \mathbf{u}, x^my^n \rangle \\
		\langle \mathbf{u}, x^{m+n-1}y \rangle  & \langle \mathbf{u}, x^{m+n-2}y^2 \rangle & \ldots & \langle \mathbf{u}, x^{m-1}y^{n+1} \rangle \\
		\vdots & \vdots &        & \vdots \\
		\langle \mathbf{u}, x^ny^m \rangle & \langle \mathbf{u}, x^{n-1}y^{m+1} \rangle & \ldots & \langle \mathbf{u}, y^{m+n} \rangle
	\end{matrix} \right).
\end{equation}
See \cite{DX14} for more details.	

	Let $\alpha = (\alpha_1, \alpha_2) $ be a bi-index such that $ |\alpha| = \alpha_1 + \alpha_2 = n $, it means that   $\alpha = (n-k, k)$, for $k=0,1,\ldots, n$. Using the notation $\mathbf{x}^\alpha = x^{\alpha_1} y^{\alpha_2}$, the matrix $M_{\alpha} (x,y) $ is defined by
\begin{equation} \label{matrixMalpha}
M_{\alpha}(x,y) = \left( 
\begin{tabular}{c|c} 
			& $\langle \mathbf{u}, \mathbf{x}^\alpha \Xset_0 \rangle$ \\
			$M_{n-1}$ & $\langle \mathbf{u}, \mathbf{x}^\alpha \Xset_1 \rangle$ \\
			& $\vdots$\\
			& $\langle \mathbf{u}, \mathbf{x}^\alpha \Xset_{n-1} \rangle$ \\
			\hline
			$\Xset_0^T \quad \Xset_1^T \quad  ... \quad \Xset_{n-1}^T$ &  $\textbf{x}^\alpha$
		\end{tabular} \right).
\end{equation}
	
\begin{proposition}[\cite{DX14}] \label{construction of pol}
Consider the monic polynomials $Q^n_{n-k,k}(x,y)$,  for $k=0,1,\ldots,n$, the entries of $\mathbb{Q}_n = (Q_{n,0}^{n}(x,y), Q_{n-1,1}^{n}(x,y), \ldots, Q_{0,n}^{n}(x,y))^T,$ given by
\begin{equation*} 
Q^n_{n-k,k}(x,y) = \dfrac{\det(M_{(n-k,k)}(x,y))}
{\det(M_{n-1})},
\quad   k=0,1,\ldots,n, 
\end{equation*}
then $\{\Qset_n\}_{n\geqslant 0}$ is the MOPS with respect to the moment functional $\mathbf{u}$.\end{proposition}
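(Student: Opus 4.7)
The strategy is the classical Gram-type determinantal argument adapted to two variables: verify that each $Q^n_{n-k,k}(x,y)$ is a monic polynomial of total degree $n$ with leading monomial $x^{n-k}y^k$, then show it is orthogonal to every monomial of total degree strictly less than $n$, and finally invoke uniqueness of the MOPS.

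\textbf{Step 1 (degree and monicity).} I expand $\det(M_{(n-k,k)}(x,y))$ along its last row, which is $(\Xset_0^T,\Xset_1^T,\ldots,\Xset_{n-1}^T,\mathbf{x}^\alpha)$ with $\alpha=(n-k,k)$. Only the last entry, $\mathbf{x}^\alpha=x^{n-k}y^k$, has degree $n$; every other entry of that row is a monomial of degree at most $n-1$. Therefore the unique contribution to the degree-$n$ part of the numerator comes from the cofactor of $\mathbf{x}^\alpha$, which is exactly $\det(M_{n-1})$ by the block structure of $M_{(n-k,k)}(x,y)$ defined in \eqref{matrixMalpha}. Dividing by $\det(M_{n-1})$ shows that $Q^n_{n-k,k}(x,y)=x^{n-k}y^k+\text{(lower-degree terms)}$, so the polynomial is monic of total degree $n$ provided $\det(M_{n-1})\neq 0$, which follows from regularity of $\mathbf{u}$.

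\textbf{Step 2 (orthogonality to lower-degree monomials).} Fix $i,j\geqslant 0$ with $i+j\leqslant n-1$. Applying $\mathbf{u}$ to $x^iy^j\,Q^n_{n-k,k}(x,y)$ and using linearity of the functional together with multilinearity of the determinant in the rows, $\mathbf{u}$ can be moved inside the determinant and acts only on the last row. The last row of the resulting matrix becomes
$$
\bigl(\langle\mathbf{u},x^iy^j\Xset_0^T\rangle,\,\langle\mathbf{u},x^iy^j\Xset_1^T\rangle,\ldots,\langle\mathbf{u},x^iy^j\Xset_{n-1}^T\rangle,\,\langle\mathbf{u},x^iy^j\mathbf{x}^\alpha\rangle\bigr).
$$
Since $x^iy^j$ is an entry of $\Xset_{i+j}$ with $i+j\leqslant n-1$, this row coincides, by \eqref{Matrix_M_n}--\eqref{Part_Matrix_M_n} and the extended column of $M_{(n-k,k)}(x,y)$, with one of the rows already present in the top $M_{n-1}$ block augmented by the top-right column. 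Hence the resulting matrix has two identical rows, its determinant vanishes, and $\langle\mathbf{u},x^iy^jQ^n_{n-k,k}\rangle=0$ for all $i+j<n$. In particular $\langle\mathbf{u},\Qset_n\Qset_m^T\rangle=\mathtt{0}$ for every $m<n$, and by symmetry in $m,n$, for $m\neq n$.

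\textbf{Step 3 (conclusion).} The matrix $H_n=\langle\mathbf{u},\Qset_n\Qset_n^T\rangle$ is symmetric; that it is positive definite follows because $\mathbf{u}$ is regular and the entries of $\Qset_n$ are linearly independent modulo polynomials of degree $<n$. Therefore $\{\Qset_n\}_{n\geqslant 0}$ is an orthogonal polynomial system with respect to $\mathbf{u}$, and since each $\Qset_n$ is monic, the uniqueness of the MOPS (recalled in the Introduction) identifies it with the MOPS associated with $\mathbf{u}$. The only delicate point is the bookkeeping in Step 2 that identifies the ``$\mathbf{u}$-of-last-row'' with a pre-existing row of the augmented matrix; once the block structure of $M_{(n-k,k)}(x,y)$ is written out explicitly this becomes a direct inspection.
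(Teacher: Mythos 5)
Your proposal is correct. Note that the paper itself gives no proof of this proposition --- it is quoted from \cite{DX14} --- and your argument is precisely the standard Gram-determinant construction underlying that reference: expansion along the last row for monicity, multilinearity of the determinant to push $\mathbf{u}$ onto the last row and produce a repeated row for orthogonality to $\Pi_{n-1}$, and uniqueness of the MOPS to conclude. The only point worth tightening is the positive definiteness of $H_n$ in Step 3, which in the paper's setting follows immediately from the integral representation \eqref{ip}: for a nonzero vector $c$ one has $c^T H_n c = \iint_\Omega (c^T\Qset_n)^2\,W\,dx\,dy > 0$, the entries of $\Qset_n$ being linearly independent because they have distinct leading monomials.
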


\section{Reflexive Bivariate Orthogonal Polynomials} 

First we need the definition of \emph{reflexive weight function}.
\begin{definition}
Consider a region $\Omega \subseteq \R^2$, such that it satisfies $(x,y) \in \Omega \Leftrightarrow (y,x) \in \Omega.$ A weight function $W(x,y)$ defined in $\Omega$, satisfying
	\begin{equation} \label{W(x,y)=W(y,x)}
		W(x,y) = W(y,x), \quad (x,y) \in \Omega,
	\end{equation}
	is called a  reflexive weight function.
\end{definition}

As we define at the introduction, a \emph{reflexive moment functional} is a functional $\mathbf{u}$ such that its associated moments satisfy 
$$
\mu_{m,n} = 
\mu_{n,m}, \quad m,n=0,1,\ldots .
$$
In particular, if $W(x,y)$ is a \emph{reflexive weight function}, the moment functional 
\begin{equation*}
	\langle \mathbf{u}, f \rangle = \iint_{\Omega} f(x,y)\, W(x,y)\, dx\,dy,
\end{equation*}
is a \emph{reflexive moment functional}.

Now we can show that a reflexive weight function yields reflexive MOPS. This is one of the main results involving reflexive weight function. 

\begin{theorem} \label{prop_sim_pol} 
The polynomial vectors of the MOPS $\{\Qset_n\}_{n\geqslant 0}$ associated with a reflexive weight function are reflexives, it means
\begin{equation*} 
Q_{n-k,k}^{n}(x,y) = Q_{k,n-k}^{n}(y,x), \quad k=0,1 \ldots,n.
\end{equation*}
\end{theorem}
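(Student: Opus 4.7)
The plan is to construct an auxiliary polynomial system $\{\widetilde{\Qset}_n\}_{n\geqslant 0}$ by swapping both the variables and the index position within each entry of $\Qset_n$, show that it is itself a monic OPS with respect to $\mathbf{u}$, and then invoke uniqueness of the MOPS to identify $\widetilde{\Qset}_n$ with $\Qset_n$.

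Concretely, for $n\geqslant 0$ and $k=0,1,\ldots,n$, I would set $\widetilde{Q}_{n-k,k}^{n}(x,y) := Q_{k,n-k}^{n}(y,x)$ and assemble $\widetilde{\Qset}_n = (\widetilde{Q}_{n,0}^{n},\ldots,\widetilde{Q}_{0,n}^{n})^T$. Since $Q_{k,n-k}^{n}$ is monic with unique highest-degree term $x^k y^{n-k}$, swapping the arguments gives $\widetilde{Q}_{n-k,k}^{n}$ the monic leading term $x^{n-k}y^k$, so the $n+1$ entries of $\widetilde{\Qset}_n$ have pairwise distinct leading monomials and form a linearly independent collection of monic polynomials of total degree exactly $n$.

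For orthogonality, the key device is the change of variables $(u,v)=(y,x)$ applied to the relevant inner products. For any polynomial $p$ of total degree at most $m<n$, the reflexivity $W(x,y)=W(y,x)$ together with the symmetry of $\Omega$ converts $\langle\mathbf{u},\widetilde{Q}_{n-k,k}^{n}(x,y)\,p(x,y)\rangle$ into $\langle\mathbf{u},Q_{k,n-k}^{n}(x,y)\,p(y,x)\rangle$ after renaming dummy variables, and this vanishes because $p(y,x)$ still has total degree at most $m<n$ and $Q_{k,n-k}^{n}$ is orthogonal to every polynomial of strictly lower degree. Taking $p$ among the entries of $\widetilde{\Qset}_m$ yields $\langle\mathbf{u},\widetilde{\Qset}_n\widetilde{\Qset}_m^T\rangle=\mathtt{0}$ for $m<n$, and the same substitution identifies the Gram matrix $\langle\mathbf{u},\widetilde{\Qset}_n\widetilde{\Qset}_n^T\rangle$ with $J_{n+1}H_n J_{n+1}$, a conjugate of $H_n$ by the orthogonal involution $J_{n+1}$ and therefore symmetric positive definite.

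Hence $\{\widetilde{\Qset}_n\}_{n\geqslant 0}$ is a monic orthogonal polynomial system associated with $\mathbf{u}$, and by uniqueness of the MOPS one concludes $\widetilde{\Qset}_n = \Qset_n$, i.e.\ $Q_{n-k,k}^{n}(x,y) = Q_{k,n-k}^{n}(y,x)$ for all $n$ and $k$. The only delicate point is the change-of-variables step, and this is precisely where both defining properties of a reflexive weight function---the symmetry of $\Omega$ under $(x,y)\mapsto(y,x)$ and the identity $W(x,y)=W(y,x)$---enter essentially; everything else reduces to leading-term bookkeeping and uniqueness.
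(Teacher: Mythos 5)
Your proof is correct, but it follows a genuinely different route from the paper's. The paper works from the determinantal representation of the monic polynomials (Proposition \ref{construction of pol}): it writes $Q^n_{n-k,k}(x,y)=\det M_{(n-k,k)}(x,y)/\det M_{n-1}$, observes that reflexivity of the moments forces $M_{n-1}(x,y)=M_{n-1}(y,x)$, and then argues that $M_{(n-k,k)}(x,y)$ is carried to $M_{(k,n-k)}(y,x)$ by an even number of row and column permutations, so the two determinants --- and hence the two polynomials --- coincide. You instead define the swapped system $\widetilde{Q}^n_{n-k,k}(x,y)=Q^n_{k,n-k}(y,x)$, check that it is monic with the correct leading terms, verify orthogonality via the change of variables $(x,y)\mapsto(y,x)$ (which is exactly where the symmetry of $\Omega$ and the identity $W(x,y)=W(y,x)$ enter), and conclude by uniqueness of the MOPS, which the paper records in the introduction with a reference to \cite{DX14}. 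Your argument is shorter and more conceptual: it avoids the moment-matrix bookkeeping and the permutation-parity count that the paper asserts rather tersely, and it isolates the one structural fact that matters, namely that the swap $p(x,y)\mapsto p(y,x)$ is a degree-preserving isometry of the inner product; your identification $\langle\mathbf{u},\widetilde{\Qset}_n\widetilde{\Qset}_n^T\rangle=J_{n+1}H_nJ_{n+1}$ is also exactly the content of Corollary \ref{Hn is self}, obtained for free. What the paper's route buys in exchange is explicitness and generality: the determinantal formula and the parity argument apply verbatim to any quasi-definite reflexive moment functional, whereas your positivity of the Gram matrix and the cleanest form of the uniqueness step live in the positive definite, weight-function setting --- which is, to be fair, precisely the setting of the theorem as stated.
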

\begin{proof}
From Proposition \ref{construction of pol}, we can write, for $k=0,1, \ldots,n,$ 
\begin{equation*} 
Q_{n-k,k}^{n}(x,y) = \dfrac{\det(M_{(n-k,k)}(x,y))}
{\det(M_{n-1}(x,y))}		
\quad \mbox{and} \quad Q_{k,n-k}^{n}(y,x) = \dfrac{\det(M_{(k,n-k)}(y,x))}
{\det(M_{n-1}(y,x))},
\end{equation*}
 where, from  \eqref{Matrix_M_n} and \eqref{Part_Matrix_M_n},  the matrices $M_{n-1}(x,y)$ and $M_{n-1}(y,x)$ are 
\begin{equation*}
M_{n-1}(x,y) =
\left(\begin{matrix}
\mu_{00} & \mu_{10} & \mu_{01} & \cdots & \mu_{n-1,0} & \cdots & \mu_{0,n-1}   \\[1ex]
\mu_{10} & \mu_{20} & \mu_{11} & \cdots & \mu_{n0} & \cdots & \mu_{1,n-1}    \\[1ex]
\mu_{01} & \mu_{11} &  \mu_{02}   & \cdots & \mu_{n-1,1} & \cdots & \mu_{0n} \\
\vdots  & \vdots & \vdots &  & \vdots & & \vdots    \\
 \mu_{n-1,0} & \mu_{n,0} & \mu_{n-1,1} & \cdots & \mu_{2n-2,0} & \cdots & \mu_{n-1,n-1}   \\
\vdots & \vdots & \vdots &  & \vdots & & \vdots    \\
\mu_{0,n-1} & \mu_{1,n-1} & \mu_{0,n} & \cdots & \mu_{n-1,n-1} & \cdots & \mu_{0,2n-2}   \\[1ex]
\end{matrix}\right)  	
\end{equation*}
and
\begin{equation*}
M_{n-1}(y,x)=  
\left(\begin{matrix}
\mu_{00} & \mu_{01} & \mu_{10} & \cdots & \mu_{0,n-1} & \cdots & \mu_{n-1,0}  \\[1ex]
\mu_{01} & \mu_{02} & \mu_{11} & \cdots & \mu_{0n} & \cdots & \mu_{n-1,1}      \\[1ex]
\mu_{10} & \mu_{11} &  \mu_{20}   & \cdots & \mu_{1,n-1} & \cdots & \mu_{n,0}   \\
\vdots  & \vdots & \vdots &  & \vdots & & \vdots   \\
 \mu_{0,n-1} & \mu_{0,n} & \mu_{1,n-1} & \cdots & \mu_{0,2n-2} & \cdots & \mu_{n-1,n-1}   \\
\vdots & \vdots & \vdots &  & \vdots & & \vdots    \\
\mu_{n-1,0} & \mu_{n-1,1} & \mu_{n,0} & \cdots & \mu_{n-1,n-1} & \cdots & \mu_{2n-2,0}   \\[1ex]
\end{matrix}\right).  	
\end{equation*}
Since the weight function is reflexive, i.e., $\mu_{m,n} =  \mu_{n,m},$ for $ m,n=0,1,\ldots ,$
 we observe that $M_{n-1}(x,y) = M_{n-1}(y,x)$ and their determinants are the same. \\

Now, from \eqref{matrixMalpha} the matrices $M_{(n-k,k)}(x,y)$ and $ M_{(k,n-k)}(y,x)$ are given, respectively, by
\begin{equation*}
\left(\begin{matrix}
\mu_{00} & \mu_{10} & \mu_{01} & \cdots & \mu_{n-1,0} & \cdots & \mu_{0,n-1} & \mu_{n-k,k}  \\[1ex]
\mu_{10} & \mu_{20} & \mu_{11} & \cdots & \mu_{n0} & \cdots & \mu_{1,n-1}  & \mu_{n-k+1,k}   \\[1ex]
\mu_{01} & \mu_{11} &  \mu_{02}   & \cdots & \mu_{n-1,1} & \cdots & \mu_{0n} &  \mu_{n-k,k+1} \\
\vdots  & \vdots & \vdots &  & \vdots & & \vdots & \vdots  \\
 \mu_{n-1,0} & \mu_{n,0} & \mu_{n-1,1} & \cdots & \mu_{2n-2,0} & \cdots & \mu_{n-1,n-1} &  \mu_{2n-k-1,k} \\
\vdots & \vdots & \vdots &  & \vdots & & \vdots & \vdots  \\
\mu_{0,n-1} & \mu_{1,n-1} & \mu_{0,n} & \cdots & \mu_{n-1,n-1} & \cdots & \mu_{0,2n-2} & \mu_{n-k,k+n-1} \\[1ex]
1  & 	x	 & y & \cdots &  x^{n-1}  & \cdots &   y^{n-1}  & x^{n-k}y^k 		\end{matrix}\right)  	
\end{equation*}
and
\begin{equation*}
\left(\begin{matrix}
\mu_{00} & \mu_{01} & \mu_{10} & \cdots & \mu_{0,n-1} & \cdots & \mu_{n-1,0} & \mu_{n-k,k}  \\[1ex]
\mu_{01} & \mu_{02} & \mu_{11} & \cdots & \mu_{0n} & \cdots & \mu_{n-1,1}  & \mu_{n-k,k+1}   \\[1ex]
\mu_{10} & \mu_{11} &  \mu_{20}   & \cdots & \mu_{1,n-1} & \cdots & \mu_{n,0} &  \mu_{n-k+1,k} \\
\vdots  & \vdots & \vdots &  & \vdots & & \vdots & \vdots  \\
 \mu_{0,n-1} & \mu_{0,n} & \mu_{1,n-1} & \cdots & \mu_{0,2n-2} & \cdots & \mu_{n-1,n-1} &  \mu_{n-k,k+n-1} \\
\vdots & \vdots & \vdots &  & \vdots & & \vdots & \vdots  \\
\mu_{n-1,0} & \mu_{n-1,1} & \mu_{n,0} & \cdots & \mu_{n-1,n-1} & \cdots & \mu_{2n-2,0} & \mu_{2n-k-1,k} \\[1ex]
1  & y	 & x & \cdots &  y^{n-1}  & \cdots &   x^{n-1}  & x^{n-k}y^k 		\end{matrix}\right).  	
\end{equation*}

Once more using the fact that $\mu_{m,n} =  \mu_{n,m},$ for $ m,n=0,1,\ldots ,$ we see that
 the number of permutations of rows and permutations of columns that transforms the matrix $M_{(n-k,k)}(x,y)$ into the matrix $ M_{(k,n-k)}(y,x)$ is even, hence 
\begin{equation*} 
\det (M_{n-k,k}(x,y)) = \det (M_{k,n-k}(y,x)), \quad for  \ k=0,1 \ldots,n,
\end{equation*}
and the result holds.
\end{proof}
 
When the polynomial vectors of the MOPS $\{\Qset_n\}_{n\geqslant 0}$ are reflexives for $n\geqslant0$, then $\{\Qset_n\}_{n\geqslant 0}$ is called reflexive MOPS. 

Next results give some connections of reflexive MOPS and some centrosymmetric matrices.

\begin{corollary} \label{Hn is self}
	Let $\{\Qset_n\}_{n\geqslant 0}$ be a MOPS associated with a  reflexive weight function satisfying \eqref{W(x,y)=W(y,x)}. Then, for $n\in\N$, the matrix $H_n$ of size $n+1$, given by 
$H_n = \langle \mathbf{u}, \Qset_n\Qset_n^T \rangle,$ is centrosymmetric, i.e.,  $H_n = (h_{ij})_{i,j=0}^{n},$
satisfies		
$$	
h_{ij} = h_{n-i,n-j}, \quad \mbox{for} \ i,j=0,1, \ldots,n.
$$
\end{corollary}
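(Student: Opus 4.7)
The plan is to unpack the definition of $h_{ij}$, apply the reflexive property of the polynomials (Theorem~\ref{prop_sim_pol}), and then exploit the reflexivity of the moment functional to swap variables inside the pairing.

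First I would write
\begin{equation*}
h_{ij} = \langle \mathbf{u}, Q_{n-i,i}^{n}(x,y)\, Q_{n-j,j}^{n}(x,y)\rangle,
\end{equation*}
using the fact that the $k$-th entry of $\Qset_n$ (indexed from $0$) is $Q_{n-k,k}^{n}$. Next, since the weight function is reflexive, Theorem~\ref{prop_sim_pol} gives $Q_{n-i,i}^{n}(x,y)=Q_{i,n-i}^{n}(y,x)$ and $Q_{n-j,j}^{n}(x,y)=Q_{j,n-j}^{n}(y,x)$, so
\begin{equation*}
h_{ij} = \langle \mathbf{u}, Q_{i,n-i}^{n}(y,x)\, Q_{j,n-j}^{n}(y,x)\rangle.
\end{equation*}

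The key observation is that reflexivity of $\mathbf{u}$ (equivalently $\mu_{m,n}=\mu_{n,m}$) together with linearity yields $\langle \mathbf{u}, p(y,x)\rangle = \langle \mathbf{u}, p(x,y)\rangle$ for every bivariate polynomial $p$; this follows by expanding $p$ in the monomial basis and swapping the indices of each moment. Applying this to the product $p(x,y)=Q_{i,n-i}^{n}(x,y)\,Q_{j,n-j}^{n}(x,y)$ gives
\begin{equation*}
h_{ij} = \langle \mathbf{u}, Q_{i,n-i}^{n}(x,y)\, Q_{j,n-j}^{n}(x,y)\rangle = h_{n-i,n-j},
\end{equation*}
which is exactly the centrosymmetry condition.

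There is essentially no serious obstacle here: the result is a two-step corollary of Theorem~\ref{prop_sim_pol} plus the elementary variable-swap identity for a reflexive functional. The only point that deserves a line of justification is the monomial-expansion argument that promotes the moment identity $\mu_{m,n}=\mu_{n,m}$ to the functional identity $\langle \mathbf{u}, p(y,x)\rangle=\langle \mathbf{u}, p(x,y)\rangle$ for arbitrary $p\in\Pi$, since that is what allows us to simultaneously swap variables in both factors inside the pairing.
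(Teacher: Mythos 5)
Your proof is correct and follows essentially the same route as the paper: rewrite $h_{ij}$ using the reflexivity of the polynomial entries from Theorem~\ref{prop_sim_pol}, then swap the roles of $x$ and $y$ inside the pairing. The only difference is in how that swap is justified: the paper performs a change of variables $x\leftrightarrow y$ in the integral and invokes $W(x,y)=W(y,x)$, whereas you promote the moment identity $\mu_{m,n}=\mu_{n,m}$ to $\langle\mathbf{u},p(y,x)\rangle=\langle\mathbf{u},p(x,y)\rangle$ by linearity; your version is marginally more general since it needs only the reflexive moment functional rather than an integral representation, but the two arguments are otherwise the same.
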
		
\begin{proof}	
Observe that $h_{ij} = \langle \mathbf{u}, Q_{n-i,i}^{n}(x,y) Q_{n-j,j}^{n}(x,y) \rangle $, for $i,j=0,1,\ldots,n$.

From Theorem \ref{prop_sim_pol}, we can write
	\begin{eqnarray*}
		h_{ij}  =  \displaystyle \iint_{\Omega} Q_{i,n-i}^{n}(y,x) Q_{j,n-j}^{n}(y,x) W(x,y) dxdy.
	\end{eqnarray*}
Making a change of variables $x\leftrightarrow y$, we have
\begin{eqnarray*}
h_{ij} = \displaystyle \iint_{\Omega} Q_{i,n-i}^{n}(x,y) Q_{j,n-j}^{n}(x,y) W(y,x) dxdy. 
\end{eqnarray*}
Finally, using the property \eqref{W(x,y)=W(y,x)}, we obtain
\begin{eqnarray*}
h_{ij} = \displaystyle \iint_{\Omega} Q_{i,n-i}^{n}(x,y) Q_{j,n-j}^{n}(x,y) W(x,y) dxdy =  h_{n-i,n-j}.
\end{eqnarray*}	
\end{proof}		

Consider $\{\Qset_{n}\}_{n\geqslant 0}$ the MOPS associated with a weight function $W(x,y)$, and  $\{\Pset_n\}_{n\geqslant 0}$ another OPS also associated with $W(x,y)$. Using the explicit expression (\ref{expl.exp}) and the orthogonality, the polynomial vectors $\Pset_n$ and 
$\Qset_n$ are related by   $\Pset_n = G_n \Qset_{n},$ $ n \geqslant 0$. In particular, as a consequence of Proposition \ref{mud. de base}, we have the following result.
 
\begin{corollary}
 If $\{\Qset_n\}_{n\geqslant 0}$ is the MOPS associated with a reflexive weight function $W(x,y)$ and $\{\Pset_n\}_{n\geqslant 0}$ is an OPS associated with $W(x,y)$ and given by $\Pset_n = G_n \Qset_{n}$, then $\Pset_n$ is reflexive polynomial vector if and only if $G_n$ is a centrosymmetric matrix.  
\end{corollary}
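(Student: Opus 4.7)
The plan is to observe that this corollary is essentially a direct composition of two earlier results, namely Theorem \ref{prop_sim_pol} and Proposition \ref{mud. de base}, so the work is purely in setting up the right identifications rather than doing new computations.

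First I would invoke Theorem \ref{prop_sim_pol}: since $W(x,y)$ is reflexive, its associated MOPS $\{\Qset_n\}_{n\geqslant 0}$ is itself reflexive, meaning that for each $n$ the polynomial vector $\Qset_n$ satisfies $Q_{n-k,k}^{n}(x,y)=Q_{k,n-k}^{n}(y,x)$ for $k=0,1,\ldots,n$. This provides the hypothesis needed to apply the change-of-basis proposition with $\Qset_n$ playing the role of a reflexive base polynomial vector.

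Next I would apply Proposition \ref{mud. de base} with the identifications $\Pset_n \leftrightarrow \Qset_n$ (the reflexive one on the right-hand side of that proposition), $\widetilde{\Pset}_n \leftrightarrow \Pset_n$ (the vector to be tested), and $T_n \leftrightarrow G_n$. The proposition then yields directly that $\Pset_n$ is a reflexive polynomial vector if and only if $G_n$ is a centrosymmetric matrix, which is the claimed equivalence.

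There is essentially no obstacle in this argument beyond making sure $G_n$ is well defined and square. This follows from the fact that both $\{\Qset_n\}_{n\geqslant 0}$ and $\{\Pset_n\}_{n\geqslant 0}$ are OPS associated with the same weight function, so by orthogonality the relation \eqref{expl.exp} between $\Pset_n$ and the canonical basis collapses to $\Pset_n = G_n\Qset_n$ with an invertible $(n+1)\times(n+1)$ matrix $G_n$ (as already stated in the paragraph preceding the corollary). Once this is noted, the corollary follows in one line from the two quoted results.
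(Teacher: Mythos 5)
Your proposal is correct and matches the paper's own reasoning: the corollary is stated there precisely as a consequence of Proposition \ref{mud. de base} applied to the reflexive vector $\Qset_n$ furnished by Theorem \ref{prop_sim_pol}, with $T_n=G_n$. Nothing further is needed.
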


\section{Main Results}

We now present more connections involving reverse matrices, centrosymmetric matrices and reflexive bivariate orthogonal polynomial systems.

The first result shows that the coefficient matrices of the three-term relations for reflexive MOPS given by \eqref{M3TR} have the reverse property. 

\begin{theorem} \label{sobre_E_1_E_2}
Let $\{\Qset_n\}_{n\geqslant 0}$ be a MOPS associated with a reflexive weight function satisfying \eqref{W(x,y)=W(y,x)} and let \eqref{M3TR} be their three-term relations. Then, for $n\in \N$, $C_{n,1} \leftrightharpoons C_{n,2}$, and $D_{n,1} \leftrightharpoons D_{n,2}$. 
\end{theorem}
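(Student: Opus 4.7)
The plan is to lift the reflexive property of the polynomials, proved in Theorem 4.2, to the level of the three-term relation matrices by substituting $x \leftrightarrow y$ and conjugating by the reversal matrices $J_{n+1}$.

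First I would translate the reflexive property $Q_{n-k,k}^n(x,y) = Q_{k,n-k}^n(y,x)$ into the compact vector identity
\begin{equation*}
J_{n+1}\,\Qset_n(x,y) = \Qset_n(y,x), \qquad n\geqslant 0,
\end{equation*}
since $J_{n+1}$ reverses the order of entries and $k \mapsto n-k$ is precisely what exchanges the bidegree components. Next, I would write the first three-term relation \eqref{M3TR} with $i=1$ and swap the roles of $x$ and $y$, obtaining
\begin{equation*}
y\,\Qset_n(y,x) = L_{n,1}\,\Qset_{n+1}(y,x) + C_{n,1}\,\Qset_n(y,x) + D_{n,1}\,\Qset_{n-1}(y,x).
\end{equation*}
Using the displayed identity, each $\Qset_m(y,x)$ becomes $J_{m+1}\Qset_m(x,y)$, and multiplying on the left by $J_{n+1}$ (which is an involution) yields
\begin{equation*}
y\,\Qset_n(x,y) = (J_{n+1}L_{n,1}J_{n+2})\,\Qset_{n+1}(x,y) + (J_{n+1}C_{n,1}J_{n+1})\,\Qset_n(x,y) + (J_{n+1}D_{n,1}J_n)\,\Qset_{n-1}(x,y).
\end{equation*}

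The key step is then uniqueness: since $\Qset_{n+1}$, $\Qset_n$, $\Qset_{n-1}$ consist of polynomial entries of three distinct total degrees and each forms a linearly independent set, comparing with the $i=2$ case of \eqref{M3TR} forces
\begin{equation*}
C_{n,2} = J_{n+1}\,C_{n,1}\,J_{n+1}, \qquad D_{n,2} = J_{n+1}\,D_{n,1}\,J_n.
\end{equation*}
(As a sanity check, the same comparison gives $L_{n,2} = J_{n+1}L_{n,1}J_{n+2}$, which one can verify directly from the explicit shape of the $L_{n,i}$ exhibited just after \eqref{M3TR}.)

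Finally, I would recall that conjugation by the reversal matrices on the left and right is precisely the reflection operation \eqref{reflection}: for any matrix $X$ of size $(m+1)\times(n+1)$, $(J_{m+1}XJ_{n+1})_{ij} = x_{m-i,n-j}$, so $J_{m+1}XJ_{n+1} = X^R$. Applying this to the two identities above gives $C_{n,2} = C_{n,1}^R$ and $D_{n,2} = D_{n,1}^R$, i.e., $C_{n,1}\leftrightharpoons C_{n,2}$ and $D_{n,1}\leftrightharpoons D_{n,2}$, as required. The only delicate point is the uniqueness of the matrix coefficients in the three-term recurrence, but this follows at once from the block-triangular structure produced by the distinct total degrees of $\Qset_{n+1}$, $\Qset_n$, $\Qset_{n-1}$ and from the monicity of the system.
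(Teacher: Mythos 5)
Your proof is correct and follows essentially the same route as the paper's: swap $x\leftrightarrow y$ in one three-term relation, invoke the reflexivity of the $\Qset_m$ (Theorem \ref{prop_sim_pol}), and identify coefficients via the linear independence of the entries of $\Qset_{n+1}$, $\Qset_n$, $\Qset_{n-1}$. The only difference is cosmetic: you package the entrywise index manipulations of the paper into the single identity $J_{m+1}\Qset_m(x,y)=\Qset_m(y,x)$ and the observation that $J_{m+1}XJ_{n+1}=X^R$, which is a clean and correct reformulation.
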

\begin{proof}	
First we denote the matrices	$C_{n,k}=(c^{(k)}_{ij})_{i,j=0}^{n,n}$ and $D_{n,k}=(d^{(k)}_{ij})_{i,j=0}^{n,n-1}$, for  $k=1,2.$ 
From the three-term relation \eqref{M3TR}, omitting the variables $(x,y)$, we can write	
\begin{equation}\label{3trx}
		x \left(\begin{array}{c}
			Q_{n,0}^n\\
			\vdots \\
			Q_{n-k,k}^n\\
			\vdots \\
			Q_{0,n}^n
		\end{array} \right) = \left(\begin{array}{c}
			Q_{n+1,0}^{n+1}\\
			\vdots \\
			Q_{n+1-k,k}^{n+1}\\
			\vdots \\
			Q_{1,n}^{n+1}
		\end{array} \right) + C_{n,1} \left(\begin{array}{c}
			Q_{n,0}^{n}\\
			\vdots \\
			Q_{n-l,l}^{n}\\
			\vdots \\
			Q_{0,n}^{n}
		\end{array} \right) + D_{n,1} \left(\begin{array}{c}
			Q_{n-1,0}^{n-1}\\
			\vdots \\
			Q_{n-1-s,s}^{n-1}\\
			\vdots \\
			Q_{0,n-1}^{n-1}
		\end{array} \right)
	\end{equation}	
and
	\begin{equation}\label{3try}
		y \left(\begin{array}{c}
			Q_{n,0}^n\\
			\vdots \\
			Q_{k,n-k}^n\\
			\vdots \\
			Q_{0,n}^n
		\end{array} \right) = \left(\begin{array}{c}
			Q_{n,1}^{n+1}\\
			\vdots \\
			Q_{k,n+1-k}^{n+1}\\
			\vdots \\
			Q_{0,n+1}^{n+1}
		\end{array} \right) + C_{n,2} \left(\begin{array}{c}
			Q_{n,0}^{n}\\
			\vdots \\
			Q_{n-l,l}^{n}\\
			\vdots \\
			Q_{0,n}^{n}
		\end{array} \right) + D_{n,2} \left(\begin{array}{c}
			Q_{n-1,0}^{n-1}\\
			\vdots \\
			Q_{n-1-s,s}^{n-1}\\
			\vdots \\
			Q_{0,n-1}^{n-1}
		\end{array} \right).
	\end{equation}	
		
From \eqref{3trx},  for $k = 0,1,\ldots,n$, we can write $xQ_{n-k,k}^n(x,y)$ as
\begin{eqnarray} \label{xQ}
& & xQ_{n-k,k}^n(x,y) = Q_{n+1-k,k}^{n+1}(x,y) \nonumber \\[1ex]
& &  + c^{(1)}_{k0}Q_{n,0}^{n}(x,y) + \cdots + c^{(1)}_{kl} Q_{n-l,l}^{n}(x,y) + \cdots + c^{(1)}_{kn}Q_{0,n}^{n}(x,y) \\[1ex]
& &   + d^{(1)}_{k0}Q_{n-1,0}^{n-1}(x,y) + \cdots + d^{(1)}_{ks} Q_{n-1-s,s}^{n-1}(x,y) + \cdots + d^{(1)}_{k,n-1} Q_{0,n-1}^{n-1}(x,y).\nonumber
\end{eqnarray}
In the same way, working on \eqref{3try},  we can write $yQ_{k,n-k}^n(x,y),$  for $k = 0,1,\ldots,n$, as  
\begin{align} \label{yQ}
&  yQ_{k,n-k}^n(x,y) = Q_{k,n+1-k}^{n+1}(x,y) \nonumber \\[1ex]
&  + c^{(2)}_{n-k,0}Q_{n,0}^{n}(x,y) + \cdots + c^{(2)}_{n-k,l} Q_{n-l,l}^{n}(x,y) + \cdots + c^{(2)}_{n-k,n}Q_{0,n}^{n}(x,y) 
\\[1ex]
&  +  d^{(2)}_{n-k,0}Q_{n-1,0}^{n-1}(x,y) + \cdots + d^{(2)}_{n-k,s} Q_{n-1-s,s}^{n-1}(x,y) + \cdots + d^{(2)}_{n-k,n-1} Q_{0,n-1}^{n-1}(x,y) \nonumber
\end{align}
Making the change of variables $ x \leftrightarrow y $ in \eqref{yQ} and using Theorem \ref{prop_sim_pol}, we obtain 
\begin{eqnarray}\label{xQII}
xQ_{n-k,k}^n(x,y)\!\!\!&\!\!=\!\!&\!\!\! Q_{n+1-k,k}^{n+1}(x,y) + c^{(2)}_{n-k,0} Q_{0,n}^{n}(x,y) + \cdots + c^{(2)}_{n-k,l} Q_{l,n-l}^{n}(x,y) + \cdots \nonumber  \\[1ex]
\!\!&&\!\!+ c^{(2)}_{n-k,n-l} Q_{n-l,l}^{n}(x,y) + \cdots + c^{(2)}_{n-k,n} Q_{n,0}^{n}(x,y) \\[1ex]
\!\!&&\!\!+ d^{(2)}_{n-k,0} Q_{0,n-1}^{n-1}(x,y) + \cdots + d^{(2)}_{n-k,s} Q_{s,n-1-s}^{n-1}(x,y) + \cdots \nonumber  \\[1ex]
\!\!&&\!\!+  d^{(2)}_{n-k,n-1-s} Q_{n-1-s,s}^{n-1}(x,y) + \cdots + d^{(2)}_{n-k,n-1} Q_{n-1,0}^{n-1} (x,y). \nonumber
\end{eqnarray}
	
Since  $\{Q_{n-l,l}^n\}^{n}_{l=0} \cup\{Q_{n-1-s,s}^{n-1}\}^{n-1}_{s=0}$ is a linearly independent set, comparing \eqref{xQ} and \eqref{xQII}, we obtain	
$$
	c^{(1)}_{ij} = c^{(2)}_{n-i, n-j}, \quad i,j=0,1, \ldots, n,
$$
and
$$
	d^{(1)}_{ij} = d^{(2)}_{n-i, n-1-j},	\quad i=0,1, \ldots,n, \ \ j=0,1, \ldots,n-1.
$$
Hence, $C_{n,1} \leftrightharpoons C_{n,2}$ and $D_{n,1} \leftrightharpoons D_{n,2}$. 
\end{proof}

We remark that, since  $C_{n,1} \leftrightharpoons C_{n,2}$ and $D_{n,1} \leftrightharpoons D_{n,2}$, it is enough to calculate the coefficient matrices associated with one variable and the other coefficient matrices are directly obtained.

\medskip
 
The reciprocal is also true. 

\begin{theorem}
	If the matrices $C_{n,i}$ and $D_{n,i}$, $i=1,2$, in \eqref{M3TR} satisfy $C_{n,1} \leftrightharpoons C_{n,2}$ and $D_{n,1} \leftrightharpoons D_{n,2}$, then the associated MOPS $\{\Qset_{n}\}_{n\geqslant0}$ is reflexive.
\end{theorem}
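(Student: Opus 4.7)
The plan is to proceed by strong induction on $n$, proving that $\Qset_n$ is reflexive for all $n \geqslant 0$. The base case $n=0$ is immediate since $\Qset_0 = (1)$. For the inductive step I assume that $\Qset_m$ is reflexive for every $m \leqslant n$ and derive the same for $\Qset_{n+1}$. The starting observation is that, because of the explicit shapes of $L_{n,1}$ and $L_{n,2}$, the componentwise versions of the two three-term relations in \eqref{M3TR}, read for $k = 0, \ldots, n$, isolate respectively the entries $Q_{n+1-k,k}^{n+1}(x,y)$ and $Q_{n-k,k+1}^{n+1}(x,y)$ of $\Qset_{n+1}$, and together cover all its entries.

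The core step is to take the second identity, substitute $k \mapsto n-k$, and swap the variables $x \leftrightarrow y$: the left-hand side becomes $Q_{k,n+1-k}^{n+1}(y,x)$, while on the right every polynomial $Q_{a,b}^m(y,x)$ is replaced by $Q_{b,a}^m(x,y)$ using the inductive hypothesis. Reindexing the two resulting sums via $l \mapsto n-l$ and $s \mapsto n-1-s$ and then applying the reverse-matrix identities $c^{(1)}_{k,l} = c^{(2)}_{n-k,n-l}$ and $d^{(1)}_{k,s} = d^{(2)}_{n-k,n-1-s}$ collapses the right-hand side into exactly the right-hand side of the first componentwise identity. Equating them forces $Q_{n+1-k,k}^{n+1}(x,y) = Q_{k,n+1-k}^{n+1}(y,x)$ for $k = 0, \ldots, n$; the remaining index $k = n+1$ is obtained from the $k = 0$ case by a final swap of variables.

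I expect the main obstacle to be the careful bookkeeping of indices during the substitutions and reindexing, particularly the asymmetry between the reversal of the square matrix $C_{n,i}$ of size $n+1$ (swap $(k,l) \mapsto (n-k,n-l)$) and of the rectangular matrix $D_{n,i}$ of size $(n+1)\times n$ (swap $(k,s) \mapsto (n-k,n-1-s)$). Unlike in the proof of Theorem \ref{sobre_E_1_E_2}, no linear-independence argument should be needed in this direction, since the two expressions for the entries of $\Qset_{n+1}$ are obtained and compared directly as polynomial identities rather than as distinct coefficient expansions in a linearly independent basis.
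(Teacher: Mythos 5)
Your proposal is correct and follows essentially the same route as the paper: induction on $n$, expressing the entries of $\Qset_{n+1}$ from the two componentwise three-term relations, swapping $x\leftrightarrow y$ in the second, and using the induction hypothesis together with $c^{(1)}_{k,l}=c^{(2)}_{n-k,n-l}$ and $d^{(1)}_{k,s}=d^{(2)}_{n-k,n-1-s}$ to match the first. The only (harmless) difference is organizational: the paper verifies $\Qset_0$ and $\Qset_1$ explicitly as base cases, whereas you absorb the step to $\Qset_1$ into the general argument via $\Qset_{-1}=0$; your closing observations about index bookkeeping and the absence of a linear-independence argument are also accurate.
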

\begin{proof}
Let	 $C_{n,k}=(c^{(k)}_{ij})_{i,j=0}^{n,n}$, $D_{n,k}=(d^{(k)}_{ij})_{i,j=0}^{n,n-1}$,  $k=1,2,$ and $\Qset_n$ be the monic orthogonal polynomial vector. The three-term relations \eqref{M3TR} for $n=0$, are
$$
x\Qset_0 = L_{0,1}\Qset_1 + C_{0,1}\Qset_0,  
$$
$$
y\Qset_0 = L_{0,2}\Qset_1 + C_{0,2}\Qset_0.
$$
Denoting $\Qset_0= (1)$ and $\Qset_1 = \left(\begin{matrix}
		x+\alpha\\
		y+\beta
	\end{matrix}\right)$, it follows that 
$$
\begin{array}{lll}
x=(1\quad 0)\left(\begin{matrix}
	x+\alpha\\
	y+\beta\end{matrix}\right) + c_{00}^{(1)},   
\\[3ex]
y=(0\quad 1)\left(\begin{matrix}
	x+\alpha\\
	y+\beta\end{matrix}\right) + c_{00}^{(2)}.	
\end{array}
$$
It is easy to see that $\alpha = \beta$, since by hypothesis, $c_{00}^{(1)} = c_{00}^{(2)}$. Hence, $Q^1_{1,0}(x,y) = Q^1_{0,1}(y,x)$, that is, $\Qset_0$ and $\Qset_1$ are reflexive polynomial vectors. 

We now prove by mathematical induction that, if $\Qset_{n}$ and $\Qset_{n-1}$ are reflexive polynomial vectors, then it holds for $\Qset_{n+1}$. Omitting the variables $(x,y)$, the three-term relations (\ref{M3TR}) can be written as
\begin{equation}\label{I}
L_{n,1} \Qset_{n+1} = x \Qset_n - C_{n,1} \Qset_n - D_{n,1} \Qset_{n-1},
\end{equation}
\begin{equation}\label{II}
L_{n,2} \Qset_{n+1} = y \Qset_n - C_{n,2} \Qset_n - D_{n,2} \Qset_{n-1}.
\end{equation}

From \eqref{I}, for  $k = 0,1,\ldots,n,$ we can write $ Q_{n+1-k,k}^{n+1}(x,y)$ as 
$$
 Q_{n+1-k,k}^{n+1}(x,y) = xQ_{n-k,k}^{n}(x,y) - \sum_{j=0}^n c^{(1)}_{kj}Q_{n-j,j}^{n}(x,y) - \sum_{j=0}^{n-1}d^{(1)}_{kj}Q_{n-1-j,j}^{n-1}(x,y).
$$ 
 On the other hand, from \eqref{II}, we can write $Q_{k,n+1-k}^{n+1}(x,y)$, for $k = 0,1,\ldots,n,$ as 
$$
Q_{k,n+1-k}^{n+1}(x,y) = y Q_{k,n-k}^{n}(x,y) - \sum_{j=0}^{n} c^{(2)}_{n-k,j}Q_{n-j,j}^{n}(x,y) - \sum_{j=0}^{n-1}d^{(2)}_{n-k,j}Q_{n-1-j,j}^{n-1}(x,y),
$$ 

Since  $\Qset_{n}$ and $\Qset_{n-1}$ are reflexives, and $C_{n,1} \leftrightharpoons C_{n,2}$ and $D_{n,1} \leftrightharpoons D_{n,2}$, we get
\begin{align*}
Q_{k,n+1-k}^{n+1}(y,x) 
= &  xQ_{k,n-k}^{n}(y,x)-\sum_{j=0}^nc^{(2)}_{n-k,j}Q_{n-j,j}^{n}(y,x)-\sum_{j=0}^{n-1}d^{(2)}_{n-k,j}Q_{n-1-j,j}^{n-1}(y,x)\\
= &  xQ_{n-k,k}^{n}(x,y)-\sum_{j=0}^nc^{(1)}_{k,n-j}Q_{j,n-j}^{n}(x,y)-\sum_{j=0}^{n-1}d^{(1)}_{k,n-j}Q_{j,n-1-j}^{n-1}(x,y)\\
= & xQ_{n-k,k}^{n}(x,y)-\sum_{j=0}^nc^{(1)}_{kj}Q_{n-j,j}^{n}(x,y)-\sum_{j=0}^{n-1}d^{(1)}_{kj}Q_{n-1-j,j}^{n-1}(x,y) \\
= & Q_{n+1-k,k}^{n}(x,y).
\end{align*}
Therefore, $\Qset_{n+1}$ is reflexive polynomial vector.
\end{proof}

In the next result we relate  reflexive MOPS with centrosymmetric matrices. 

\begin{theorem}\label{E_n centrosymmetric matrix}
With the same assumptions of the Theorem \ref{sobre_E_1_E_2}, for $n\in\N$, the following matrices
$$
\widehat{C}_n =	\left( \begin{array}{c|c}
C_{n,1} & \mathtt{0} \\ \hline
\mathtt{0} & C_{n,2}
\end{array} \right)
	\quad	  \mbox{and} \quad
\widehat{D}_n = \left( \begin{array}{c|c}
D_{n,1} & \mathtt{0} \\ \hline
\mathtt{0} & D_{n,2}
\end{array} \right)
$$ 
are centrosymmetric matrices.
\end{theorem}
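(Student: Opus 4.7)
The plan is to observe that this statement is essentially an immediate combination of two earlier results that have already been established, so the proof should be short and essentially a one-step reduction.

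First, I would invoke Theorem \ref{sobre_E_1_E_2}, which under the stated hypotheses (MOPS associated to a reflexive weight function satisfying \eqref{W(x,y)=W(y,x)}) yields precisely the two reverse relations $C_{n,1} \leftrightharpoons C_{n,2}$ and $D_{n,1} \leftrightharpoons D_{n,2}$ for every $n \in \N$. This is the content already available, and it is exactly the input needed to build centrosymmetric block matrices.

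Second, I would apply Proposition \ref{prop_junType2}, item 1), which is the general characterisation: for any two matrices $T_1, T_2$ of the same size, $T_1 \leftrightharpoons T_2$ if and only if the block diagonal matrix with $T_1$ and $T_2$ on the diagonal (and zero off-diagonal blocks of matching size) is centrosymmetric. Applying this with $T_1 = C_{n,1}$, $T_2 = C_{n,2}$ gives that $\widehat{C}_n$ is centrosymmetric, and applying it with $T_1 = D_{n,1}$, $T_2 = D_{n,2}$ gives that $\widehat{D}_n$ is centrosymmetric. The only mild care needed is to note that the off-diagonal zero blocks have the correct dimensions so that the reflection operation interchanges them compatibly — for $\widehat{C}_n$ the blocks are square of order $n+1$, and for $\widehat{D}_n$ the blocks are $(n+1) \times n$, but in both cases the zero blocks sit in the two off-diagonal positions and map to each other under reflection, so Proposition \ref{prop_junType2} applies verbatim.

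I do not expect any real obstacle: the work has been front-loaded into Theorem \ref{sobre_E_1_E_2} and Proposition \ref{prop_junType2}, and the present statement is simply the package deal obtained by feeding the conclusions of the former into the hypotheses of the latter. The only thing worth writing explicitly in the proof is the chain of two citations; everything else is bookkeeping.
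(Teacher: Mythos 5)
Your proof is correct and is exactly the paper's argument: the authors likewise obtain the reverse relations from Theorem \ref{sobre_E_1_E_2} and then apply Proposition \ref{prop_junType2} to conclude that $\widehat{C}_n$ and $\widehat{D}_n$ are centrosymmetric. Your additional remark about the dimensions of the off-diagonal zero blocks is a harmless elaboration of what the paper leaves implicit.
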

\begin{proof}
 The proof follows directly from  Proposition \ref{prop_junType2} and  Theorem \ref{sobre_E_1_E_2}.
\end{proof}

\medskip

We now present the results for orthonormal polynomial systems.

We need the following lemma, about  the square root of a symmetric centrosymmetric positive definite matrix. Following \cite[p. 440]{HJ13}, if $X$ is a symmetric positive definite matrix, there exists a unique symmetric positive definite matrix, $X^\frac{1}{2}$, such that $X^\frac{1}{2}X^\frac{1}{2} = X$. The matrix  $X^\frac{1}{2}$ is called the square root matrix of $X$.

\begin{lemma} \label{Lemmasquare}
Let $X$ be a symmetric centrosymmetric positive definite matrix. Then, the square root matrix $X^{\frac{1}{2}}$ is symmetric centrosymmetric positive definite.
\end{lemma}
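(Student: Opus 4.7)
The plan is to leverage the uniqueness of the symmetric positive definite square root, together with the fact that conjugation by the reversal matrix $J_{n+1}$ preserves symmetry and positive definiteness. Since $X$ is symmetric centrosymmetric positive definite, the general theory (see \cite[p.~440]{HJ13}) already supplies a unique symmetric positive definite matrix $X^{1/2}$ satisfying $X^{1/2} X^{1/2} = X$. So the only thing left to prove is that $X^{1/2}$ is centrosymmetric, i.e., $J_{n+1} X^{1/2} = X^{1/2} J_{n+1}$, or equivalently $J_{n+1} X^{1/2} J_{n+1} = X^{1/2}$, using $J_{n+1}^{-1} = J_{n+1}$.

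First, I would observe that the matrix $Y := J_{n+1} X^{1/2} J_{n+1}$ is itself a symmetric positive definite square root of $X$. Symmetry follows from $J_{n+1}^T = J_{n+1}$ and the symmetry of $X^{1/2}$, giving $Y^T = J_{n+1}^T (X^{1/2})^T J_{n+1}^T = Y$. Positive definiteness follows from $v^T Y v = (J_{n+1} v)^T X^{1/2} (J_{n+1} v) > 0$ for every $v \neq 0$, since $J_{n+1}$ is nonsingular and $X^{1/2}$ is positive definite. Finally, using $J_{n+1}^2 = I_{n+1}$ and the centrosymmetry of $X$ (which reads $J_{n+1} X J_{n+1} = X$),
\[
Y^2 = J_{n+1} X^{1/2} J_{n+1} J_{n+1} X^{1/2} J_{n+1} = J_{n+1} X J_{n+1} = X.
\]
By the uniqueness of the symmetric positive definite square root, $Y = X^{1/2}$, which is precisely the centrosymmetry of $X^{1/2}$.

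I do not anticipate any real obstacle here; the argument rests on the two clean facts (conjugation by $J_{n+1}$ preserves the relevant structural properties, and the symmetric positive definite square root is unique) and a short algebraic manipulation. An alternative route would be to invoke Proposition~\ref{Prop_SCPD}: writing $X = V \Lambda V^T$ with orthonormal eigenvectors that are each either symmetric or skew-symmetric, one defines $X^{1/2} := V \Lambda^{1/2} V^T$ and checks centrosymmetry componentwise. That approach is perfectly valid, but the uniqueness argument above is both shorter and independent of whether eigenvalues are simple, so I would present it as the main proof.
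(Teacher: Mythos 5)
Your proof is correct, and it takes a genuinely different route from the paper. The paper proves the lemma via the spectral decomposition $X = R\,D\,R^T$, defines $X^{1/2} = R\,D^{1/2}R^T$, and then invokes Proposition~\ref{Prop_SCPD} (eigenvectors of a symmetric centrosymmetric positive definite matrix are symmetric or skew-symmetric) to check the centrosymmetry condition $\tilde{x}_{ij}=\tilde{x}_{n-i,n-j}$ entrywise; the case of all eigenvalues equal is treated separately. Your argument instead sets $Y = J_{n+1}X^{1/2}J_{n+1}$, verifies that $Y$ is a symmetric positive definite square root of $X$ (using $J_{n+1}XJ_{n+1}=X$), and concludes $Y = X^{1/2}$ by uniqueness. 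Your version is shorter, and — as you correctly note — it is also more robust: Proposition~\ref{Prop_SCPD} as literally stated fails for eigenvalues of multiplicity greater than one (a generic vector in a two-dimensional eigenspace spanned by one symmetric and one skew-symmetric eigenvector is neither), so the paper's componentwise argument implicitly requires choosing an orthonormal eigenbasis consisting of symmetric and skew-symmetric vectors, a point the paper's treatment of the intermediate case (some, but not all, eigenvalues coinciding) glosses over. The uniqueness argument avoids this entirely and only uses the elementary facts $J_{n+1}^T=J_{n+1}$ and $J_{n+1}^2=I_{n+1}$.
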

\begin{proof}  
Considering $X = (x_{ij})_{i,j=0}^n$ a symmetric centrosymmetric positive definite of size $n+1$, there exist an orthogonal matrix $R=(r_{ij})_{i,j=0}^n$ and a diagonal matrix $D = \diag (\lambda_0, \lambda_1,\ldots, \lambda_n),$ where $\lambda_i >0,$ for $ i=0,1, \ldots, n$, are the eigenvalues of $X$, such that 
\begin{equation}\label{H_n}
		X = R \,D \,R^T,
\end{equation}
see \cite{HJ13}. Hence, the square root matrix of $X$ can be written as
\begin{equation}\label{H_n^{1/2}}
		X^\frac{1}{2} = R \,D^\frac{1}{2}\, R^T,
\end{equation}
where $D^\frac{1}{2} = \diag(\sqrt{\lambda_0}, \sqrt{\lambda_1}, \ldots, \sqrt{\lambda_n})$ and we denote  $X^\frac{1}{2} = (\tilde{x}_{ij})_{i,j=0}^n$.

First we suppose that $\lambda_0 = \lambda_1 = \cdots = \lambda_n$, then $X^\frac{1}{2} = \sqrt{\lambda} I_{n+1}$ and it is 
symmetric centrosymmetric positive definite matrix.

We suppose now that the eigenvalues $\lambda_i$, $i=0,1,\ldots,n$, are not simultaneously equals.
From \eqref{H_n} we observe that the entries of matrix $X$ are linear combinations of $\{\lambda_0, \lambda_1,\ldots, \lambda_n\}$, and it is possible to write
$$
x_{ij} = \sum_{k=0}^{n} \,r_{ik}\,r_{jk}\,\lambda_k.
 = \sum_{k=0}^{n} \,d_k^{(i,j)}\, \lambda_k,
$$
where $d_k^{(i,j)} = r_{ik}\,r_{jk}$, for $k=0,1,\ldots, n.$
Similarly, from \eqref{H_n^{1/2}},  the entries of matrix $X^\frac{1}{2}$ can be written as 
$$ 
\tilde{x}_{ij} = \sum_{k=0}^{n} \,r_{ik}\,r_{jk}\,\sqrt{\lambda_k}  = \sum_{k=0}^{n} \,d_k^{(i,j)}\, \sqrt{\lambda_k}.
$$

From Proposition \ref{Prop_SCPD}, we know that the eigenvectors of a symmetric centrosymmetric positive definite matrix are either symmetric or skew-symmetric vectors. In the decomposition $X=R D R^T$ the columns of matrix $R$ correspondent to the eigenvectors of $X$. Hence the entries of the $k$th column of $R$ satisfy either
$$
r_{i,k} = r_{n-i,k}, \quad i=0,1,\ldots,n,
$$
or
$$ 
r_{i,k} = -r_{n-i,k}, \quad i=0,1,\ldots,n.
$$
Therefore, for a fixed $k$, one can conclude the following 
$$
d_k^{(i,j)} = r_{ik} \, r_{jk} = r_{n-i,k} \, r_{n-j,k} = d_k^{(n-i,n-j)}.
$$
Finally, we observe that, for $i,j= 0,1,\ldots, n,$
$$
\tilde{x}_{ij}   = \sum_{k=0}^{n} \,d_k^{(i,j)}\, \sqrt{\lambda_k} = \sum_{k=0}^{n} \,d_k^{(n-i,n-j)}\, \sqrt{\lambda_k} =
\tilde{x}_{n-i,n-j} , 
$$
therefore, $X^\frac{1}{2}$ is symmetric centrosymmetric positive definite matrix. 
\end{proof}

Next results relate reflexive orthonormal polynomial system with centrosymmetric matrices.

\begin{theorem}\label{Teorema Self-Rev}
Consider the orthonormal polynomial system $\{\Pset_n\}_{n\geqslant 0}$ associated with a weight function $W(x,y)$ satisfying \eqref{W(x,y)=W(y,x)} and defined by $\Pset_n = H_n^{-1/2} \Qset_n$, for $n \geqslant 0$, where $\{\Qset_n\}_{n\geqslant 0}$ is the associated MOPS and  $H_n=\langle \mathbf{u}, \Qset_{n}\Qset_{n}^T \rangle$.
Let $A_{n,i}$ and $B_{n,i}$, $i=1,2$, be the coefficient matrices of the three-term relations \eqref{O3TR} for $\{\Pset_n\}_{n\geqslant 0}$. Then, the matrices
\begin{equation*} 
		\widehat{A}_n = \left( \begin{array}{c|c}
			A_{n,1} & \mathtt{0}\\
			\hline
			\mathtt{0} & A_{n,2}
		\end{array} \right)
\quad \mbox{and} \quad
		\widehat{B}_n = \left( \begin{array}{c|c}
			B_{n,1} & \mathtt{0}\\
			\hline
			\mathtt{0} & B_{n,2}
		\end{array} \right)
\end{equation*}
are centrosymmetric.
\end{theorem}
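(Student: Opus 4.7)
The plan is to reduce the orthonormal case to the monic case via the identification $\Pset_n = H_n^{-1/2}\Qset_n$, transfer the reverse property already established for the monic coefficients in Theorem \ref{sobre_E_1_E_2} through to $A_{n,i}$ and $B_{n,i}$, and then invoke Proposition \ref{prop_junType2}(1), which immediately promotes a reverse pair to a centrosymmetric block-diagonal matrix.

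First I would substitute $\Qset_n = H_n^{1/2}\Pset_n$ into the monic three-term relations \eqref{M3TR} and left-multiply by $H_n^{-1/2}$. Comparing the result with \eqref{O3TR} produces the explicit identifications
\[
A_{n,i} \;=\; H_n^{-1/2}\,L_{n,i}\,H_{n+1}^{1/2}, \qquad B_{n,i} \;=\; H_n^{-1/2}\,C_{n,i}\,H_n^{1/2}, \qquad i=1,2.
\]
Three ingredients are then available to transport the reverse property through these triple products. By Corollary \ref{Hn is self}, each $H_n$ is symmetric centrosymmetric positive definite, and Lemma \ref{Lemmasquare} then guarantees that $H_n^{1/2}$ and $H_n^{-1/2}$ are centrosymmetric, so each coincides with its own reverse. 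By Theorem \ref{sobre_E_1_E_2} we have $C_{n,1} \leftrightharpoons C_{n,2}$. Finally, a direct inspection of the explicit forms of $L_{n,1}$ and $L_{n,2}$ recorded in Section 3 shows at once that $L_{n,1} \leftrightharpoons L_{n,2}$.

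The decisive technical step is the product rule $(XY)^R = X^R Y^R$ for conformable matrices $X$ of size $(m+1)\times(n+1)$ and $Y$ of size $(n+1)\times(p+1)$. This identity is not stated separately in the paper but follows in one line from the definition \eqref{reflection} by the change of summation index $k\mapsto n-k$ (it is implicit in the proof of Proposition \ref{properties X reverse Y}(2)). Applying this rule twice to the triple products above yields
\[
(B_{n,1})^R \;=\; (H_n^{-1/2})^R\,(C_{n,1})^R\,(H_n^{1/2})^R \;=\; H_n^{-1/2}\,C_{n,2}\,H_n^{1/2} \;=\; B_{n,2},
\]
and analogously $(A_{n,1})^R = H_n^{-1/2}\,L_{n,2}\,H_{n+1}^{1/2} = A_{n,2}$. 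Hence $A_{n,1}\leftrightharpoons A_{n,2}$ and $B_{n,1}\leftrightharpoons B_{n,2}$, and Proposition \ref{prop_junType2}(1) immediately gives that $\widehat{A}_n$ and $\widehat{B}_n$ are centrosymmetric. The only substantive obstacle is formulating the product rule $(XY)^R = X^R Y^R$ cleanly (or, equivalently, re-deriving it entrywise in the spirit of Proposition \ref{properties X reverse Y}(2)); once that is available, the argument is pure bookkeeping.
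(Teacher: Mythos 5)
Your proof is correct and follows essentially the same route as the paper: both express $A_{n,i}$ and $B_{n,i}$ as triple products sandwiched between $H_n^{\pm 1/2}$ and $H_{n+1}^{\pm 1/2}$, and both rely on Corollary \ref{Hn is self}, Lemma \ref{Lemmasquare}, Theorem \ref{sobre_E_1_E_2}, and Proposition \ref{prop_junType2}. The only organizational difference is that the paper writes $\widehat{A}_n$, $\widehat{B}_n$ directly as products of block-diagonal centrosymmetric matrices and invokes closure of centrosymmetry under products, whereas you first establish the entrywise relations $A_{n,1}\leftrightharpoons A_{n,2}$ and $B_{n,1}\leftrightharpoons B_{n,2}$ via the (easily verified) rule $(XY)^R=X^RY^R$ and your observation that $L_{n,1}\leftrightharpoons L_{n,2}$ --- which in fact yields the paper's subsequent corollary as a byproduct; the two arguments are equivalent under Proposition \ref{prop_junType2}(1).
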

\begin{proof}
From \cite{DX14}, we know that, for orthonormal polynomial system $\{\Pset_n\}_{n\geqslant 0}$, defined by $\Pset_n = H_n^{-1/2} \Qset_n$,  the matrices $A_{n,i}$, $B_{n,i}$, $C_{n,i}$, and $D_{n,i}$ of the associated three-term relations  are related as	
\begin{equation*}
A_{n,i} = H_n^{\frac{1}{2}} D_{n+1,i}^T H_{n+1}^{-\frac{1}{2}}  \quad and \quad 
B_{n,i} = H_n^{-\frac{1}{2}} C_{n,i} H_{n}^{\frac{1}{2}},  \quad i=1,2,
\end{equation*}
where the 
matrix $H_n^{\frac{1}{2}}$ is the symmetric centrosymmetric positive definite matrix such that $H_n^{\frac{1}{2}}H_n^{\frac{1}{2}} = H_n$.
	
Hence, the matrix $\widehat{A}_n$ can be written as
\begin{equation*}
\widehat{A}_n = \left( \begin{array}{c|c}
H_{n}^{\frac{1}{2}} D_{n+1,1}^T H_{n+1}^{-\frac{1}{2}} & \mathtt{0}\\
\hline
\mathtt{0} & H_{n}^{\frac{1}{2}} D_{n+1,2}^T 
H_{n+1}^{-\frac{1}{2}} 
\end{array} \right),
\end{equation*}
or   
$$
\widehat{A}_n = \left( \begin{array}{c|c}
	H_{n}^{\frac{1}{2}}  & \mathtt{0}\\
	\hline
	\mathtt{0} & H_{n}^{\frac{1}{2}}
\end{array} \right)
\left( \begin{array}{c|c}
	D_{n+1,1}^T  & \mathtt{0}\\
	\hline
	\mathtt{0} & D_{n+1,2}^T 
\end{array} \right)
\left( \begin{array}{c|c}
	H_{n+1}^{-\frac{1}{2}} & \mathtt{0}\\
	\hline
	\mathtt{0} &  H_{n+1}^{-\frac{1}{2}} 
\end{array} \right). 
$$

From Corollary \ref{Hn is self} and Lemma \ref{Lemmasquare}, we know that $H_n^{\frac{1}{2}}$ is centrosymmetric matrix, then $H_n^{\frac{1}{2}} \leftrightharpoons H_n^{\frac{1}{2}}$.
Hence, from Proposition \ref{prop_junType2} 
$$
\left( \begin{array}{c|c}
	H_{n}^{\frac{1}{2}}  & \mathtt{0}\\
	\hline
	\mathtt{0} & H_{n}^{\frac{1}{2}}
\end{array} \right)
\quad \mbox{and} \quad 
\left( \begin{array}{c|c}
	H_{n+1}^{-\frac{1}{2}} & \mathtt{0}\\
	\hline
	\mathtt{0} &  H_{n+1}^{-\frac{1}{2}} 
\end{array} \right)	
$$
are centrosymmetric matrices.
Furthermore, since  $D_{n+1,1} \leftrightharpoons D_{n+1,2}$, from Theorem \ref{E_n centrosymmetric matrix}
$$
\left( \begin{array}{c|c}
	D_{n+1,1}^T  & \mathtt{0}\\
	\hline
	\mathtt{0} & D_{n+1,2}^T 
\end{array} \right)
$$
is also centrosymmetric matrix. Therefore, from item 2) in Proposition \ref{prod. centrosymmetric matrices}, the matrix $\widehat{A}_n$ is centrosymmetric. 

Similarly, since 
$B_{n,i} = H_n^{-\frac{1}{2}} C_{n,i} H_{n}^{\frac{1}{2}}, $ $ i=1,2,$
and $C_{n,1} \leftrightharpoons C_{n,2}$, it follows that $\widehat{B}_n$ is centrosymmetric matrix.
\end{proof}

Finally, we show that the orthonormal polynomial system, $\{\Pset_n\}_{n\geqslant 0}$, given by $\Pset_n = H_n^{-1/2} \Qset_n$, associated with a reflexive weight function also has the reflexive property.  

\begin{corollary}
With the same hypotheses of Theorem \ref{Teorema Self-Rev},  the orthonormal polynomial  $\Pset_n$, defined by $\Pset_n = H_n^{-1/2} \Qset_n$, for $n \geqslant 0$, is a reflexive polynomial vector.
Furthermore, the associated coefficient matrices  of the three-term relations \eqref{O3TR} satisfy $A_{n,1}\leftrightharpoons A_{n,2}$ and $B_{n,1}\leftrightharpoons B_{n,2}$.
\end{corollary}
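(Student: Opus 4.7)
The plan is to derive both conclusions by combining pieces already assembled in the paper, with no new computation of three-term coefficients required.

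For the reflexivity of $\Pset_n$, I would appeal to Proposition \ref{mud. de base} applied to the change of basis $\Pset_n = H_n^{-1/2}\Qset_n$. Theorem \ref{prop_sim_pol} tells us that $\Qset_n$ is a reflexive polynomial vector because the weight is reflexive, so it suffices to show that the transition matrix $H_n^{-1/2}$ is centrosymmetric. Corollary \ref{Hn is self} shows that $H_n$ is (symmetric) centrosymmetric, and since it is also positive definite, Lemma \ref{Lemmasquare} guarantees that $H_n^{1/2}$ is symmetric centrosymmetric positive definite. By item 5) of Proposition \ref{prod. centrosymmetric matrices}, the inverse $H_n^{-1/2}$ is again centrosymmetric, and Proposition \ref{mud. de base} then yields that $\Pset_n$ is reflexive.

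For the reverse property of the coefficient matrices, I would not redo the block-product analysis carried out in the proof of Theorem \ref{Teorema Self-Rev}; instead I would simply read it off. That theorem asserts that the block matrices
\[
\widehat{A}_n = \left( \begin{array}{c|c} A_{n,1} & \mathtt{0}\\ \hline \mathtt{0} & A_{n,2} \end{array} \right) \quad\text{and}\quad \widehat{B}_n = \left( \begin{array}{c|c} B_{n,1} & \mathtt{0}\\ \hline \mathtt{0} & B_{n,2} \end{array} \right)
\]
are centrosymmetric. Item 1) of Proposition \ref{prop_junType2} is precisely a two-way statement: a block diagonal matrix of this shape is centrosymmetric if and only if its two diagonal blocks have the reverse property. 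Applying that equivalence in the nontrivial direction immediately gives $A_{n,1} \leftrightharpoons A_{n,2}$ and $B_{n,1} \leftrightharpoons B_{n,2}$.

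There is no real obstacle here; the work was done earlier. The only thing worth double-checking while writing is that the sizes line up so that Proposition \ref{prop_junType2} genuinely applies: $A_{n,i}$ are $(n{+}1)\times(n{+}2)$ and $B_{n,i}$ are $(n{+}1)\times(n{+}1)$, so the two blocks on the diagonal of $\widehat{A}_n$ (resp.\ $\widehat{B}_n$) have identical shape, which is exactly what the proposition requires.
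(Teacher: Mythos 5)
Your proposal is correct and follows essentially the same route as the paper: reflexivity of $\Pset_n$ via Theorem \ref{prop_sim_pol}, Corollary \ref{Hn is self}, Lemma \ref{Lemmasquare} and Proposition \ref{mud. de base}, and the reverse relations by reading the equivalence in item 1) of Proposition \ref{prop_junType2} off the centrosymmetry of $\widehat{A}_n$ and $\widehat{B}_n$ established in Theorem \ref{Teorema Self-Rev}. Your explicit invocation of item 5) of Proposition \ref{prod. centrosymmetric matrices} to pass from $H_n^{1/2}$ to $H_n^{-1/2}$ is a small detail the paper leaves implicit, but the argument is the same.
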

\begin{proof}
From Theorem \ref{prop_sim_pol}, $\Qset_n$ is a reflexive polynomial vector.
From Corollary \ref{Hn is self} and Lemma \ref{Lemmasquare}, the matrix $H_n^{\frac{1}{2}}$ is centrosymmetric. Hence, from Proposition \ref{mud. de base}, it follows that $\Pset_n = H_n^{-1/2} \Qset_n$ is a reflexive polynomial vector.

The reverse relations $A_{n,1}\leftrightharpoons A_{n,2}$ and $B_{n,1}\leftrightharpoons B_{n,2}$ follow  from Proposition \ref{prop_junType2}.
\end{proof}

\section{Particular Cases}

In this section we present several particular cases and examples of reflexive  OPS to illustrate the new concepts.

\subsection{Tensor product of univariate orthogonal polynomials}

The simplest example  of bivariate orthogonal polynomials system is associated with the tensor product of the same weight function in one variable. Consider  $w(x)$ a weight function defined for $x \in (a,b)$, then 
$$
W(x,y) = w(x)w(y) =  W(y,x), \quad for \ (x,y)\in (a,b) \times (a,b),
$$
is a reflexive weight function.
The associated OPS, $\{\Pset_n\}_{n\geqslant 0}$, is given by 
$$
\Pset_n = \left(\begin{matrix}
	p_n(x)p_0(y) \\[2ex]
	p_{n-1}(x)p_1(y) \\[2ex]
	\vdots \\[2ex]
	p_0(x)p_n(y)
\end{matrix}\right),
$$
where $\{p_n(x)\}_{n\geqslant 0}$ is the orthogonal polynomial sequence with respect to $w(x)$. Hence, $\Pset_n$ is a reflexive orthogonal polynomial vector.

Moreover, consider the three-term recurrence relation for the orthogonal polynomials $p_n(x)$, ${n\geqslant 0},$ given by
$$
xp_n(x) = \lambda_n p_{n+1}(x) + \gamma_n p_n(x) + \upsilon_n p_{n-1}(x), \quad n\geqslant 0,
$$ 
with $p_{-1}(x)=0$ and $ \lambda_n, \gamma_n, \upsilon_n \in \mathbb{R}$. It is known that the tree-term relations for the associated OPS are
$$
x_i\Pset_n = \Lambda_{n,i}\Pset_{n+1} + \Gamma_{n,i}\Pset_n + \Upsilon_{n,i}\Pset_{n-1}, \quad i=1,2,
$$
where $x_1=x$, $x_2=y$ and the coefficient matrices  are given by 
$$
\Lambda_{n,1} = \left(\begin{array}{ccccc|c}
\lambda_n &  &  &  & \bigcirc & 0 \\
	      & \lambda_{n-1} &  &  &   & 0 \\
	      &  &  & \ddots &  & \vdots \\
\bigcirc  &  &  &  & \lambda_0 & 0
\end{array}\right), \quad 
\Lambda_{n,2} = \left(\begin{array}{c|ccccc}
0      &   \lambda_0   &   &   &         & \bigcirc \\
0      &   &    \ddots  &       &         &  \\
\vdots &   &        &       & \lambda_{n-1} &  \\
0      &\bigcirc &   &   &         &  \lambda_n 
\end{array}\right),$$
$$
\Gamma_{n,1}=\left(\begin{matrix}
\gamma_n &  &  & \bigcirc \\
	 	& \gamma_{n-1}&  &  \\
	 &  & \ddots &  \\
	\bigcirc &  &  & \gamma_0
\end{matrix}\right), \quad 
\Gamma_{n,2}=\left(\begin{matrix}
\gamma_0 &  &  & \bigcirc \\
 & \ddots &  &  \\
 &  & \gamma_{n-1} &  \\
\bigcirc &  &  & \gamma_n 
\end{matrix}\right), $$
$$
\Upsilon_{n,1} = \left(\begin{matrix}
	\upsilon_n &   &  & \bigcirc \\
	 & \upsilon_{n-1} &  &  \\
	 &  & \ddots  &  \\
	\bigcirc &  &  & \upsilon_1 \\
	\hline
	0 & 0 & \ldots & 0
\end{matrix}\right), \quad \mbox{and} \quad 
\Upsilon_{n,2} = \left(\begin{matrix}
0 & \ldots & 0 & 0 \\
\hline 
\upsilon_1 &  &  & \bigcirc \\
 & \ddots &  &  \\
 &  & \upsilon_{n-1} &  \\
\bigcirc &  &  & \upsilon_n 
\end{matrix}\right).
$$
Clearly, 
$\Lambda_{n,1} \leftrightharpoons \Lambda_{n,2},$
$\Gamma_{n,1} \leftrightharpoons \Gamma_{n,2}$,  and 
$\Upsilon_{n,1} \leftrightharpoons \Upsilon_{n,2}$. 
 
\subsection{Bivariate orthogonal polynomials on the simplex}

	Let us consider the family of the monic bivariate orthogonal polynomials on the simplex $\Omega=\{(x,y)\in\R^2 \, | \, x,y\geqslant 0,1-x-y\geqslant 0\}$	
associated with the weight function
$$
W^{(\alpha,\beta,\gamma)}(x,y) = x^\alpha y^\beta(1-x-y)^\gamma, \quad \alpha, \beta, \gamma > -1.
$$

We consider $\alpha = \beta$ and the MOPS associated with the weight function 
\begin{equation}\label{simplex_wf}  W_{\alpha,\alpha,\gamma}(x,y) = (xy)^\alpha (1-x-y)^\gamma, \quad \alpha,  \gamma > -1.
\end{equation}
Observe that $ W_{\alpha,\alpha,\gamma}(x,y)$ defined in $\Omega$ is a reflexive weight function. Following \cite[p.36]{DX14}, let 
$\{\mathbb{V}^{(\alpha,\gamma)}_n\}_{n\geqslant0}$ be the monic OPS on the simplex orthogonal with respect to the weight function $W_{\alpha,\alpha,\gamma}(x,y)$.

From the explicit formula given in \cite[p. 36]{DX14}, we can calculate the first vectors of the MOPS, that are 
$$
\mathbb{V}^{(\alpha,\gamma)}_0 = (1), \quad 
\mathbb{V}^{(\alpha,\gamma)}_1 = \begin{pmatrix}
		x - \dfrac{2\alpha+1}{4\alpha + 2\gamma + 3}\\[2ex]
		y - \dfrac{2\alpha+1}{4\alpha + 2\gamma + 3}	
\end{pmatrix}, 
$$
and
$$
\mathbb{V}^{(\alpha,\gamma)}_2 = \begin{pmatrix}
x^2 - \dfrac{2(2\alpha + 3)}{4\alpha + 2\gamma + 7}x + \dfrac{(2\alpha+1)(2\alpha+3)}{(4\alpha + 2\gamma + 5)
(4\alpha + 2\gamma + 7)} \\[3ex]
xy - \dfrac{2\alpha+1}{4\alpha+2\gamma+7}(x+y)+\dfrac{(2\alpha+1)^2}{(4\alpha+2\gamma+5)(4\alpha+2\gamma+7)} \\[3ex]
y^2 - \dfrac{2(2\alpha + 3)}{4\alpha + 2\gamma + 7}y + \dfrac{(2\alpha+1)(2\alpha+3)}{(4\alpha + 2\gamma + 5)
(4\alpha + 2\gamma + 7)}
\end{pmatrix}.
$$

These polynomial vectors are reflexives. Now, consider the three-term relation satisfied by $\{\mathbb{V}^{(\alpha,\gamma)}_n\}_{n\geqslant0}$, 
$$
x_i\mathbb{V}^{(\alpha,\gamma)}_n = L_{n,i}\mathbb{V}^{(\alpha,\gamma)}_{n+1} + 
C^{(\alpha,\gamma)}_{n,i}\mathbb{V}^{(\alpha,\gamma)}_n 
+ D^{(\alpha, \gamma)}_{n,i}\mathbb{V}^{(\alpha,\gamma)}_{n-1}, \quad i=1,2. 
$$
From \cite{AAP22}, the shape of the coefficient matrices is 
$$
C^{(\alpha, \gamma)}_{n,1} =\begin{pmatrix}
	c^{(1)}_{00} & \quad & \quad & \bigcirc \\
	c^{(1)}_{10} & c^{(1)}_{11} & \quad & \quad \\
	\quad & \ddots & \ddots & \quad \\
	\bigcirc & \quad & c^{(1)}_{n,n-1} & c^{(1)}_{nn} 
\end{pmatrix}, 
C^{(\alpha, \gamma)}_{n,2} = \begin{pmatrix}
	c^{(2)}_{00} & c^{(2)}_{01} & \quad & \bigcirc \\
	\quad & c^{(2)}_{11} & \ddots & \quad \\
	\quad & \quad & \ddots & c_{n-1,n}^{(2)} \\
	\bigcirc & \quad & \quad & c^{(2)}_{nn} 
\end{pmatrix},
$$
$$
D^{(\alpha,  \gamma)}_{n,1}=\begin{pmatrix}
	d^{(1)}_{00} & \quad & \quad   & \quad             & \bigcirc \\[1ex]
	d^{(1)}_{10} & d^{(1)}_{11} & \quad           & \quad & \quad \\[1ex]
	d^{(1)}_{20} & d^{(1)}_{21} & d^{(1)}_{22} & \quad & \quad \\[1ex]
	\quad           & \ddots & \ddots & \ddots & \quad \\[1ex]
	\quad           & \quad  & \ddots & \ddots & d^{(1)}_{n-1,n-1} \\[1ex]
	\bigcirc        & \quad  & \quad  & d^{(1)}_{n,n-2} & d^{(1)}_{n,n-1}
\end{pmatrix}
$$
and
$$ D^{(\alpha,  \gamma)}_{n,2}=\begin{pmatrix}
	d^{(2)}_{00} & d^{(2)}_{01} & \quad & \quad & \bigcirc \\[1ex]
	d^{(2)}_{10} & d^{(2)}_{11} & d^{(2)}_{12} & \quad & \quad \\[1ex]
	\quad & d^{(2)}_{2,1} & d^{(2)}_{22} & \ddots & \quad \\[1ex]
	\quad & \quad & \ddots & \ddots & d^{(2)}_{n-2,n-1} \\[1ex]
	\quad & \quad & \quad & d^{(2)}_{n-1,n-2}  & d^{(2)}_{n-1,n-1} \\[1ex]
	\bigcirc & \quad & \quad & \quad & d^{(2)}_{n,n-1}
\end{pmatrix}. 
$$
The explicit expressions of the entries are
\begin{align*}
c^{(1)}_{ii} & = \dfrac{(i-n)(\alpha-i+n)}{2\alpha + \gamma + 2n + 1} + \dfrac{(-i+n+1)(\alpha -i +n+1)}{2\alpha +\gamma + 2n + 3}\\[1ex]	
& = \dfrac{(n-i+1)(\alpha+n-i+1)}{2\alpha + \gamma + 2n + 3} - \dfrac{(n-i)(\alpha + n-i)}{2\alpha +\gamma + 2n + 1} = c^{(2)}_{n-i,n-i}, \quad 0 \leqslant i \leqslant n,
\\
c^{(1)}_{i+1,i} & = - \dfrac{2(i+1)(\alpha+i+1)}{(2\alpha + \gamma + 2n + 1)(2\alpha+\gamma+2n+3)}\\[1ex] 
& = \dfrac{2(n-i-1-n)(\alpha-n+i+1+n)}{(2\alpha + \gamma + 2n + 1)(2\alpha+\gamma+2n+3)} = c^{(2)}_{n-(i+1),n-i}, \quad 0 \leqslant i \leqslant n-1,
	\end{align*}
and $c^{(1)}_{ij}=0$, for $j>i$ or $j<i-1$, $c^{(2)}_{ij}=0$, for $j<i$ or $j>i+1$, it follows that $C^{(\alpha,  \gamma)}_{n,1}\leftrightharpoons C^{(\alpha,  \gamma)}_{n,2}$. 

Analogously, working with the explicit expressions of the entries of the matrices $D^{(\alpha,  \gamma)}_{n,i}$, $i=1,2$, given in \cite{AAP22}, we get
\begin{align*}
& d^{(1)}_{ii}  = d^{(2)}_{n-i,n-1-i}, \quad 0\leqslant i \leqslant n-1, \\[.6ex]
& d^{(1)}_{i+1,i} = d^{(2)}_{n-i-1,n-1-i},\quad 0 \leqslant i \leqslant n-2, \\[.6ex]
& d^{(1)}_{i+2,i}  =  d^{(2)}_{n-i-2,n-1-i}, \quad 0\leqslant i \leqslant n-2,
\end{align*}
and, since $d^{(1)}_{ij}=0$, for $j>i$ or $j<i-2$, $d^{(2)}_{ij}=0$, for $j<i-1$ or $j>i+1$, it follows that $D^{(\alpha,  \gamma)}_{n,1}\leftrightharpoons D^{(\alpha,  \gamma)}_{n,2}$.

\subsection{Bivariate Freud weight function}

In \cite{BCP22} we investigate the
bivariate Freud weight function given by
$$
W(x,y) =e^{-q(x,y)},\qquad (x,y) \in \mathbb{R}^2,
$$
where 
$$
q(x,y) = a_{4,0} \, x^4 + a_{2,2} \, x^2 \, y^2 + a_{0,4}\, y^4 + a_{2,0} \, x^2 + a_{0,2} \, y^2
$$
and $a_{i,j}$ are real parameters. 
Setting $a_{4,0}=a_{0,4}=a$, $a_{2,2}=b$, and $a_{2,0}=a_{0,2}=c$, the particular case,
$$
W(x,y) = e^{-[a (x^4 +y^4) + b \,x^2 \, y^2   + c(x^2+y^2)]}
$$
is a reflexive weight function. Since $W(x,y)$ is an even function, the MOPS  $\{\mathbb{Q}_n\}_{n\geqslant 0}$ satisfy the three-term relations
\begin{equation*}
		x_i\Qset_n = L_{n,i}\Qset_{n+1} + D_{n,i}\Qset_{n-1}, \quad i=1,2,
\end{equation*}
for $n\geqslant 0$, $\mathbb{Q}_{-1} =0$ and $\mathbb{Q}_0 = (1)$, where  $D_{n,1}\leftrightharpoons D_{n,2}$.

\subsection{Uvarov modification}

Let $W(x,y)$ be a weight function defined on a domain $\Omega\subset\R^2$ such that all moments exist
$$
\iint_\Omega x^k\,y^l\, W(x,y)dxdy < +\infty,
$$ 
for $k, l \geqslant 0$. 
Define the inner products 
$$ 
(f, g) = \iint_\Omega f(x,y) g(x,y) W(x,y) dxdy, 
$$
and 
$$ 
(f, g)_U = (f, g) + \mathbf{f(x)} \, \Lambda  \,\mathbf{g(x)}^T, 
$$
where $ \mathbf{x} = ((x_0,y_0), (x_1,y_1), \ldots , (x_n,y_n))$, $(x_i,y_i) \in \mathbb{R}^2$, $ i=0,1, \ldots , n$, are fixed points, $\Lambda$ is a symmetric positive semi-definite matrix of size $ (n+1) \times (n+1) $,  $ \mathbf{f(x)} = (f(x_0,y_0), $ $f(x_1,y_1), \ldots, f(x_n,y_n))$, and $ \mathbf{g(x)} = (g(x_0,y_0), g(x_1,y_1), \ldots, g(x_n,y_n))$. This modification of the original inner product is known as Uvarov modification, see \cite{AAP22,DFPPY10}.

Now, we show that if $ W(x,y) $ is reflexive weight function, $ \Lambda $ is centrosymmetric matrix, and $\mathbf{x}=((x_0,y_0), (x_1,y_1), \ldots , (x_n,y_n))$ is such that $ x_i = y_{n-i},$ $ i=0,1, \ldots , n$, then the inner product $(f, g)_U$ is reflexive in the sense that
$$ 
(x^k, y^l)_U = (x^l, y^k)_U, \quad k, l \geqslant 0.
$$
In fact, let  $\Lambda =(\lambda_{ij})_{i,j=0}^{n,n}$, and
$$
(x^l, y^k)_U = \displaystyle \iint_\Omega x^ly^k W(x,y) dx dy + \mathbf{x^l} \Lambda (\mathbf{y^k})^T, 
$$
where $\mathbf{x^l} = (x_0^l, x_1^l,  \ldots, x_n^l)$ and  $\mathbf{y^k} = (y_0^k, y_1^k,  \ldots, y_n^k)$. Then, 
\begin{align*}
(x^l, y^k)_U &
= \iint_\Omega x^l y^k W(x,y) dx dy + \displaystyle \sum_{i=0}^n \sum_{j=0}^n \lambda_{ij}x_i^l y_j^k\\
& =
\iint_\Omega y^lx^k W(y,x) dx dy + \displaystyle \sum_{i=0}^n \sum_{j=0}^n \lambda_{ij}x_i^l y_j^k\\
&=   \iint_\Omega x^ky^l W(x,y) dx dy + \displaystyle \sum_{i=0}^n \sum_{j=0}^n \lambda_{ij}x_i^l y_j^k\\
& =   \iint_\Omega x^ky^l W(x,y) dx dy + \displaystyle \sum_{i=0}^n \sum_{j=0}^n \lambda_{n-i,n-j}x_{n-j}^k y_{n-i}^l,
\end{align*}
making the change of variables $x \leftrightarrow y$ in the first above integral, using the fact that $W$ is reflexive, $\Lambda$ is centrosymmetric matrix and   $ x_i = y_{n-i}, \, i = 0,1, \ldots, n $.
Hence, making $r=n-i, \, s=n-j$ and using that $ \Lambda$ is symmetric, it becomes 
$$ (x^l, y^k)_U = 
\iint_\Omega x^ky^l W(x,y) dx dy + \displaystyle \sum_{s=0}^n \sum_{r=0}^n \lambda_{s,r}x_{s}^k y_{r}^l = (x^k, y^l)_U.  
$$

\medskip

As a numerical example, we consider again OPS on simplex in $\mathbb{R}^2$, with the weight function $W^{(1,1/2)}(x,y)$ defined in \eqref{simplex_wf}. 
Let $\Lambda = \frac{1}{2} I_3$ and $ \mathbf{x} = ((1,0),(0,0), $ $(0,1))$, hence, the reflexive Uvarov inner product is given by
\begin{align*}
(f,g)_U = & \iint_\Omega \,f(x,y)\,g(x,y)\, x\,y\,\sqrt{1-x-y}\, dx\, dy \\
& + \dfrac{1}{2}[f(1,0)g(1,0) + f(0,0)g(0,0) + f(0,1)g(0,1)]. 
\end{align*}
The first polynomial vectors of the associated MOPS  were calculated in \cite{AAP22}, and they are reflexives:
$$
\Qset_0 = (1), \quad 
\Qset_1 = \begin{pmatrix} x - \dfrac{10459}{31361} \\[2ex]
y - \dfrac{10459}{31361} 
\end{pmatrix}, $$
and
$$
\Qset_2 = \begin{pmatrix}
	x^2 - \dfrac{320811709991693}{321113175737485}x + \dfrac{36006461568}{64222635147497}y + \dfrac{51957376}{30832708855} \\[2ex]
	xy - \dfrac{5355008}{7115240505} (x+y) - \dfrac{69058048}{92498126565}  \\[2ex]
	y^2 + \dfrac{36006461568}{64222635147497}x - \dfrac{320811709991693}{321113175737485}y + \dfrac{51957376}{30832708855} 
\end{pmatrix}.$$

\subsection{Christoffel modification}

Let $\lambda(x,y)$ be a real bivariate polynomial given as
\begin{equation} \label{lambda pol.}
\lambda (x,y) = a \,(x^2 + y^2) + b\, x\,y + c\, (x + y) + d,
\end{equation}
where $ a,b,c,d \in \R $ and $ |a| + |b| > 0 $. We  consider the functional $\mathbf{v}$ obtained from a polynomial modification of a reflexive functional $\mathbf{u}$, given by
$$ 
\langle \mathbf{v}, p(x,y) \rangle = \langle \lambda(x,y) \mathbf{u}, p(x,y) \rangle = \langle \mathbf{u}, \lambda (x,y) \,p(x,y) \rangle,
$$
for $p(x,y) \in \Pi$. The Christoffel modification on a multivariate functional, using multiplication by a polynomial of degree 2, was studied in \cite{DFPP16}. 

Let ${v}_{mn}$ be the associated  moments of the functional $\mathbf{v}.$
Hence,
\begin{align*}
{v}_{m,n} 
= & \ \langle \mathbf{v}, x^my^n \rangle \\
=  & \ \langle \mathbf{u}, [a (x^2 + y^2) + b\, x\,y + c (x + y) + d] x^m y^n  \rangle \\[1ex]
= & \ a \langle \mathbf{u}, x^{m+2}y^n + x^{m}y^{n+2} \rangle
+ b \langle \mathbf{u}, x^{m+1}y^{n+1} \rangle  \\
& \ + c \langle \mathbf{u}, x^{m+1}y^n  +  x^{m}y^{n+1} \rangle + d \langle \mathbf{u}, x^{m}y^n \rangle
\\[1ex]
= & \ a \langle \mathbf{u}, x^{n}y^{m+2} + x^{n+2}y^{m} \rangle
+ b \langle \mathbf{u}, x^{n+1}y^{m+1} \rangle  \\
& \ + c \langle \mathbf{u}, x^{n}y^{m+1}  +  x^{n+1}y^{m} \rangle + d \langle \mathbf{u}, x^n y^m \rangle \\[1ex]
= & \ \langle \mathbf{u}, [a (x^2 + y^2) + b\, x\,y + c (x +  y) + d ] x^n  y^m  \rangle  
 =  {v}_{n,m},
\end{align*}
since $ \mathbf{u} $ is reflexive.  Therefore, the Christoffel modification of a reflexive moment functional by a polynomial of type \eqref{lambda pol.} preserves the reflexive property. 

Let $\{\Qset_n\}_{n\geqslant 0}$ and $\{\widetilde{\Qset}_n\}_{n\geqslant 0}$ be the respective MOPS associated respectively with $\mathbf{u}$ and $\mathbf{v}$. We remark that, since $\mathbf{u}$ and $\mathbf{v}$ are reflexive moment functional, then $\{\Qset_n\}_{n\geqslant 0}$ and $\{\widetilde{\Qset}_n\}_{n\geqslant 0}$ are reflexive MOPS. 

From \cite[Th.~4.1]{DFPP16}, it is known that, for $n \geqslant 1$, there exist real matrices $R_n$ and $S_n$ of respective sizes $(n+1) \times n$ and $ n\times (n-1)$, with $S_2 \not\equiv \mathtt{0}$, such that 
\begin{equation} \label{rel. P and tildeP}
	 \Qset_n = \widetilde{\Qset}_n + R_n \widetilde{\Qset}_{n-1} + S_n \widetilde{\Qset}_{n-2}, \quad n \geqslant 1.
\end{equation}

We now verify that the matrices $R_n$ and $S_n$ are centrosymmetric. First we denote 
$R_{n}=(r^{(n)}_{ij})_{i,j=0}^{n,n-1}$,
$S_{n}=(s^{(n)}_{ij})_{i,j=0}^{n,n-2}$,
$$
\Qset_n = (Q_{n,0}^{n}(x,y), Q_{n-1,1}^{n}(x,y), \ldots, Q_{0,n}^{n}(x,y))^T,
$$
and
$$
\widetilde{\Qset}_n = (\widetilde{Q}_{n,0}^{n}(x,y), \widetilde{Q}_{n-1,1}^{n}(x,y), \ldots, \widetilde{Q}_{0,n}^{n}(x,y))^T.
$$

From relation \eqref{rel. P and tildeP} we get 
\begin{equation} \label{expr n-k,k}
Q_{n-k,k}^{n}(x,y) = \widetilde{Q}_{n-k,k}^{n}(x,y) + \sum_{j=0}^{n-1} r_{n-k,j}^{(n)} \widetilde{Q}_{n-1-j,j}^{n-1}(x,y) + \sum_{j=0}^{n-2} s^{(n)}_{n-k,j} \widetilde{Q}_{n-2-j,j}^{n-2}(x,y),
\end{equation}
for $ k=0,1,\ldots, n,$ and
\begin{equation} \label{expr k,n-k}
{Q}_{k,n-k}^{n}(y,x) = \widetilde{Q}_{k,n-k}^{n}(y,x) + \sum_{j=0}^{n-1} r_{kj}^{(n)} \widetilde{Q}_{n-1-j,j}^{n-1}(y,x) + \sum_{j=0}^{n-2} s^{(n)}_{kj} \widetilde{Q}_{n-2-j,j}^{n-2} (y,x).
\end{equation} 
Using the fact that  $\{\widetilde{\Qset}_n\}_{n \geqslant 0}$ is reflexive, the representation \eqref{expr k,n-k} becomes
\begin{equation*} 
	{Q}_{k,n-k}^{n}(y,x) = \widetilde{Q}_{n-k,k}^{n}(x,y) + \sum_{j=0}^{n-1} r_{kj}^{(n)} \widetilde{Q}_{j,n-1-j}^{n-1}(x,y) + \sum_{j=0}^{n-2} s^{(n)}_{kj} \widetilde{Q}_{j,n-2-j}^{n-2} (x,y).
\end{equation*} 
Making $ l = n-1-j$  in the first summation and $l = n-2-j $ in the second, we get
\begin{equation} \label{expr final}
Q_{k,n-k}^{n}(y,x) = \widetilde{Q}_{n-k,k}^{n}(x,y) + \sum_{l=0}^{n-1} r_{k,n-1-l}^{(n)} \widetilde{Q}_{n-1-l,l}^{n-1}(x,y) + \sum_{l=0}^{n-2} s^{(n)}_{k,n-2-l} \widetilde{Q}_{n-2-l,l}^{n-2} (x,y)
\end{equation} 
for $k=0,1,\ldots, n.$ 

Comparing expressions \eqref{expr n-k,k} and \eqref{expr final}, and using that $\Qset_n$ is reflexive polynomial vector, we have $r^{(n)}_{n-k,j} = r^{(n)}_{k,n-1-j}$ and $s^{(n)}_{n-k,j} = s^{(n)}_{k,n-2-j}$. Finally, setting $i = n-k$, then
$$r^{(n)}_{ij} = r^{(n)}_{k,n-1-j} \quad \mbox{and} \quad s^{(n)}_{ij} = s^{(n)}_{k,n-2-j}.$$
Therefore, $R_n$ and $S_n$, $n \geqslant 1$, are centrosymmetric matrices.

Conversely, from \cite[Th.~4.3]{DFPP16}, consider $\{\Qset_n\}_{n \geqslant 0}$ a MOPS associated with a moment functional $\mathbf{u}$ and two sequences of matrices $\{R_n\}_{n\geqslant 1}$ and $\{S_n\}_{n\geqslant 2}$ of size $(n+1)\times n$ and $(n+1) \times (n-1)$ respectively, with $S_2 \not\equiv \mathtt{0}$. Then, the monic polynomial system, $\{\widetilde{\Qset}_n\}_{n \geqslant 0}$, defined by 
$\widetilde{\Qset}_0  = \Qset_0, $
\begin{align*}
\widetilde{\Qset}_1 & = \Qset_1 - R_1 \Qset_0, \\
\widetilde{\Qset}_n & = \Qset_n - R_n \Qset_{n-1} - S_n \Qset_{n-2}, \quad n \geqslant 2,
\end{align*}
is a MOPS associated with a moment functional  $\mathbf{v}= \lambda(x,y) \mathbf{v}$, where
$$ 
\lambda(x,y) = S_2^TH_2^{-1}\Qset_2 + M_1^TH_1^{-1}\Qset_1 + H_0^{-1}\Qset_0 
$$
and $H_n = \langle \mathbf{u}, \Qset_n\Qset_n^T \rangle.$

Now we consider the particular case when $\mathbf{u}$ is a reflexive moment functional and $\{\Qset_n\}_{n \geqslant 0}$ is a reflexive MOPS. If the matrices $R_n$, ${n\geqslant 1}$ and $S_n,$ $n \geqslant 2,$ are  centrosymmetric matrices, from Proposition \ref{mud. de base} follows that the  MOPS $\{\widetilde{\Qset}_n\}_{n \geqslant 0}$ is also a reflexive MOPS. 

Moreover, from Corollary \ref{Hn is self} and items 3) and 5) of Proposition \ref{prod. centrosymmetric matrices}, the polynomial $\lambda(x,y)$ is a reflexive polynomial. Therefore, $\mathbf{v} = \lambda(x,y) \mathbf{u}$ is a reflexive moment functional.

\section*{Acknowledgements}

This work is part of the doctoral thesis of the second author (GSC) at UNESP, S\~{a}o Jos\'{e} do Rio Preto, SP, Brazil, and it began to be developed during a research visit by GSC to the University of Granada, Spain, supported by the grant 88887.716711/2022-00 from CAPES, in the scope of the Program CAPES-PrInt,  International Cooperation Project number  88887.310463/2018-00, Brazil.

First author (CFB) thanks for the supported, through the Grant 2022/09575-5, from FAPESP, S\~{a}o Paulo, Brazil.

Third author (TEP) thanks Grant CEX2020-001105-M funded by MCIN/AEI/ 10.13039/501100011033 and Research Group Goya-384, Spain.


\end{document}